\documentclass[oneside]{amsart}
\usepackage[utf8]{inputenc}
\usepackage{amsmath,amstext,amsopn,amssymb, amsfonts, amsthm, mathtools}
\usepackage[dvipsnames]{xcolor}
\usepackage[marginparwidth=2.5cm]{geometry}
\usepackage{float}
\usepackage{bbm}
\usepackage{pdfsync}
\usepackage{tikz}
\usepackage{marginnote}
\usepackage{enumitem}
\usepackage{subfig}
\usepackage{caption}
\usepackage{graphicx}
\usepackage{mathrsfs}
\usepackage[ruled,vlined,algosection]{algorithm2e}
\DeclareMathAlphabet{\mathpzc}{OT1}{pzc}{m}{it}

\newcommand{\ultriangle}{%
  \mathord{\tikz[baseline=-0.15ex,scale=0.13]
    \draw[line width=0.5pt] (0,0) -- (0,1) -- (1,1) -- cycle;}%
}

\usepackage[pdftex,bookmarks,colorlinks,breaklinks,pagebackref]{hyperref}
\usepackage[abbrev]{amsrefs}

\definecolor{dullmagenta}{rgb}{0.4,0,0.4}   
\definecolor{darkblue}{rgb}{0,0,0.4}
\definecolor{darkgreen}{rgb}{0,0.4,0}

\definecolor{figplum}{HTML}{8B5A9F}
\definecolor{figteal}{HTML}{008C95}
\definecolor{figslate}{HTML}{475569}
\definecolor{cbvermillion}{RGB}{213,94,0}
\definecolor{cbsky}{RGB}{86,180,233}

\hypersetup{
  linkcolor=darkblue,
  citecolor=blue,
  filecolor=dullmagenta,
  urlcolor=darkblue
}

{}

\newtheorem{definition}{Definition}
\newtheorem*{definition*}{Definition}

\newtheorem{theorem}{Theorem}
\newtheorem*{theorem*}{Theorem}

\newtheorem*{conjecture*}{Conjecture}

\newtheorem*{question*}{Question}

\newtheorem{lemma}[theorem]{Lemma}
\newtheorem*{lemma*}{Lemma}

\newtheorem{corollary}[theorem]{Corollary}
\newtheorem*{corollary*}{Corollary}

\theoremstyle{definition}

\newtheorem*{remark*}{Remark}

\theoremstyle{plain}

\newtheorem*{example*}{Example}

\numberwithin{equation}{section}
\numberwithin{theorem}{section}

\makeatletter
\newcommand{\customlabel}[2]{
   \protected@write \@auxout {}{
     \string \newlabel {#1}{{#2}{\thepage}{#2}{#1}{}} }
   \hypertarget{#1}{#2}
}
\makeatother

\def\XXint#1#2#3{{\setbox0=\hbox{$#1{#2#3}{\int}$}
     \vcenter{\hbox{$#2#3$}}\kern-.5\wd0}}

\newcommand{\sh}{\operatorname{sh}}

\newcommand{\ind}{\mathbbm{1}}
\newcommand{\mass}{\operatorname{Mass}}

\newcounter{cmt}
\newcommand{\comment}[2]{%
  \stepcounter{cmt}%
  \ifodd\value{cmt}\normalmarginpar\else\reversemarginpar\fi
  \marginnote{%
    \tikz\node[draw=blue, rounded corners=2pt, inner sep=4pt,
               text width=\dimexpr\marginparwidth-12pt\relax,
               align=left, font=\footnotesize, text=blue]{#2};%
  }[\baselineskip]%
  {\color{red}#1}%
}
\newcommand{\commentnote}[1]{%
  \stepcounter{cmt}%
  \ifodd\value{cmt}\normalmarginpar\else\reversemarginpar\fi
  \marginnote{%
    \tikz\node[draw=blue, rounded corners=2pt, inner sep=4pt,
               text width=\dimexpr\marginparwidth-12pt\relax,
               align=left, font=\footnotesize, text=red]{#1};%
  }%
}

\makeindex
\begin{document}

\title{Improved weighted bounds for the strong maximal function}
\author{Sheldy Ombrosi}

\address{Departamento de Matem\'atica e Instituto de Matemática,
Universidad Nacional del Sur (UNS)--CONICET,
Bah\'ia Blanca, Argentina}

\email{sombrosi@uns.edu.ar}
\author{Guillermo Rey}
\address{Universidad Autónoma de Madrid}
\email{guillermo.rey@uam.es}

\subjclass[2020]{42B25, 28A15, 42B35}
\keywords{Strong maximal function, weight, sparse, antichain}
\thanks{Research supported in part by grant RYC2024-051323-I, funded by MICIU/AEI/10.13039/501100011033.}
\begin{abstract}
We improve the exponents of the $A_p$ constant in the
strong and weak-type weighted bounds for the strong maximal function,
for every $1<p<\infty$ and every dimension $d\geq2$.
In dimension two, the strong and weak $L^2(w)$ exponents improve
from $2$ to $7/4$ and from $3/2$ to $5/4$, respectively.
The proof exploits the geometry of pairwise intersections of
antichains of dyadic rectangles.
\end{abstract}

\maketitle


\section{Introduction}

Let $\mathcal{M}_{\mathcal{R}}$ be the strong maximal function in
$\mathbb{R}^d$, that is
\begin{align*}
  \mathcal{M}_{\mathcal{R}}f = \sup_{R \in \mathcal{R}} \frac{\ind_R}{|R|}
    \int_R |f(x)| \, dx,
\end{align*}
where $\mathcal{R}$ consists of all
$d$-dimensional axis-parallel rectangles,
i.e.: cartesian products of $d$ one-dimensional
intervals.

It is well-known that $\mathcal{M}_{\mathcal{R}}$ is bounded
on $L^p(\mathbb{R}^d)$ for $1 < p \leq \infty$.
This follows by a standard iteration argument
using the observation that
\begin{align*}
  \mathcal{M}_{\mathcal{R}}f \leq (M_1 \circ \dots \circ M_d)f,
\end{align*}
where $M_i$ are the one-dimensional maximal
functions along the $i$-th direction.

When considering weighted estimates, the same
argument yields, for $1<p<\infty$ and $w\in A_p(\mathcal{R})$,
\begin{align} \label{intro:iteration}
  \|\mathcal{M}_{\mathcal{R}}f\|_{L^p(w)} \lesssim_{p,d} [w]_{A_p(\mathcal{R})}^{\frac{d}{p-1}} \|f\|_{L^p(w)},
\end{align}
where $[w]_{A_p(\mathcal{R})}$ is defined by
\begin{align*}
  [w]_{A_p(\mathcal{R})} = \sup_{R \in \mathcal{R}} \langle w \rangle_R
  \langle \sigma \rangle_R^{p-1} \qquad 1 < p < \infty.
\end{align*}
Here $\sigma$ denotes the dual weight $\sigma = w^{1-p'}$ and
$\langle f \rangle_R$ denotes the average of $f$ over $R$:
$\langle f \rangle_R = \frac{1}{|R|}\int_R f$.
Standard arguments show that $w \in A_p(\mathcal{R})$ is a necessary
condition for boundedness on $L^p(w)$, but the sharp exponent is not known.
See \cite{Luque2014} for an enlightening discussion on this topic.

In a recent preprint \cite{Lerner2026}, A. Lerner has shown that
a linear $A_2(\mathcal{R})$ bound fails. More precisely, in
$\mathbb{R}^2$ he obtains the lower bound
\begin{align*}
  \|\mathcal{M}_{\mathcal{R}}\|_{L^2(w) \to L^2(w)} \gtrsim
  [w]_{A_2(\mathcal{R})} \sqrt{\log [w]_{A_2(\mathcal{R})}}
\end{align*}
for a sequence of weights $w$ whose $A_2(\mathcal{R})$
characteristic tends to $\infty$. 

Up to now and to our knowledge, no upper bound with an exponent smaller than $\frac{d}{p-1}$ is known.
The main purpose of this paper is to show that \eqref{intro:iteration} can be improved. More specifically, we obtain the following result:

\begin{theorem} \label{MainTheorem}
  Let $1<p<\infty$, $d\geq 2$, and $w \in A_p(\mathcal{R})$. Then
  \begin{align*}
    \|\mathcal{M}_{\mathcal{R}}f\|_{L^p(w)}
    \lesssim_{p,d} [w]_{A_p(\mathcal{R})}^{\alpha}
    \|f\|_{L^p(w)},
  \end{align*}
  where
  \begin{align*}
    \alpha = \frac{d}{p-1} - \frac{\Big\lfloor \dfrac{d}{2}\Big\rfloor}{2p(p-1)}.
  \end{align*}
\end{theorem}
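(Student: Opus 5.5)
The plan is to prove the case $d=2$ with exponent $\alpha_2=\frac{2}{p-1}-\frac{1}{2p(p-1)}$, and then obtain general $d$ by grouping coordinates in pairs. If we write $\mathbb{R}^d=\mathbb{R}^2\times\cdots\times\mathbb{R}^2\,(\times\mathbb{R})$ with $\lfloor d/2\rfloor$ two-dimensional factors, we have the pointwise bound
\begin{align*}
\mathcal{M}_{\mathcal{R}}f\le \big(\mathcal{M}^{(1)}_{\mathcal{R}_2}\circ\cdots\circ \mathcal{M}^{(\lfloor d/2\rfloor)}_{\mathcal{R}_2}\circ M_{d}\big)f.
\end{align*}
Here $\mathcal{M}^{(j)}_{\mathcal{R}_2}$ is the two-dimensional strong maximal function acting in the $j$-th pair of variables, and the last factor $M_d$ appears only when $d$ is odd. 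By Lebesgue differentiation, for a.e.\ choice of the frozen variables the restriction of $w$ to a two-dimensional (or one-dimensional) slice satisfies $[w(\cdot,y)]_{A_p(\mathcal{R}_2)}\le [w]_{A_p(\mathcal{R})}$. So we may apply the two-dimensional bound slice by slice and use Fubini, and the $A_p$ exponents add. This gives $\lfloor d/2\rfloor\alpha_2+(d-2\lfloor d/2\rfloor)\frac{1}{p-1}$, which is exactly the $\alpha$ in Theorem~\ref{MainTheorem}. Since $A_p(\mathcal{R})$ weights are doubling in each variable separately, we may also replace $\mathcal{M}_{\mathcal{R}_2}$ by a finite average of dyadic strong maximal functions over shifted grids (the one-third trick in each coordinate). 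It therefore suffices to prove the $d=2$ estimate for the dyadic operator $M_{\mathcal{D}}$ over dyadic rectangles, with constants independent of the grid.

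For $d=2$ I would argue by duality and testing in Sawyer form. Write $f=g\sigma$; it suffices to bound $\|M_{\mathcal{D}}(g\sigma)\|_{L^p(w)}\lesssim[w]^{\alpha_2}\|g\|_{L^p(\sigma)}$. Linearize via the level sets $\Omega_k=\{M_{\mathcal{D}}(g\sigma)>2^k\}$. Each $\Omega_k$ is the union of the maximal dyadic rectangles $R$ with $\langle g\sigma\rangle_R>2^k$, and these rectangles form an \emph{antichain} $\mathcal{A}_k$ (no one contains another). They are not disjoint, however, and this lack of disjointness is exactly the source of the loss in the iterated bound. The crude estimate $w(\Omega_k)\le\sum_{R\in\mathcal{A}_k}w(R)$ combined with the usual Carleson/sparse-type summation reproduces the exponent $\frac{2}{p-1}$. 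The improvement has to come from a better control of the overlap through the quadratic quantity
\begin{align*}
\int\Big(\sum_{R\in\mathcal{A}_k}\ind_R\Big)^2 w=\sum_{R,R'\in\mathcal{A}_k}w(R\cap R').
\end{align*}
The key geometric fact I would establish is this: two incomparable dyadic rectangles in the plane are either disjoint or \emph{cross}, with $R\cap R'=I\times J'$ where $R=I\times J$, $R'=I'\times J'$, $I\subsetneq I'$ and $J'\subsetneq J$. So their intersection is a full-height strip of $R$ in one direction and a full-width strip of $R'$ in the other. On such strips we can apply the one-dimensional $A_p$ (hence $A_\infty$) property of $w$ and $\sigma$ in each variable separately. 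This gives the decay $w(R\cap R')/w(R)\lesssim$ a power of $|R\cap R'|/|R|$ with constant depending on $[w]_{A_p}$ only through one-dimensional constants.

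I would then combine the quadratic overlap bound with Cauchy--Schwarz/H\"older in the level-set sum, interpolating between the trivial $L^1$-overlap bound and the $L^2$-overlap bound. The exponent $\frac{1}{2p(p-1)}$ should come from optimizing this interpolation, since the square root produced by Cauchy--Schwarz halves one $\frac{1}{p(p-1)}$-type loss. As a sanity check I would first prove the weak-type estimate with exponent $\frac{1}{p-1}+\frac1p-\frac{1}{2p}$, which equals $5/4$ at $p=2$. For that bound one only has to estimate $w(\Omega_k)$ for a single level, and the antichain overlap argument appears in its simplest form. After that I would run the strong-type argument through the full level-set summation.

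The main obstacle is the weighted antichain overlap estimate itself: bounding $\sum_{R\ne R'\in\mathcal{A}}w(R\cap R')$ by $\sum_{R}w(R)$ times a factor $[w]^{\gamma}$ with $\gamma$ strictly less than what iteration gives. Crossing rectangles can form long chains, for example a family of rectangles of the same area arranged like staircase hyperbolas, and the weight may concentrate on their intersections. My first attempt would be to sort the pairs by eccentricity ratio and sum a geometric series in the ratio $|R\cap R'|/|R|$, using the one-dimensional $A_\infty$ decay in each direction. If that fails to close, I would fall back on a stopping-time selection of $\sigma$-principal rectangles inside each $\mathcal{A}_k$ and apply the crossing estimate only between principal rectangles.
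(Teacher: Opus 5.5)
Your reductions are sound and match the paper: the one-third trick, slicing $w$ so that the sliced characteristics are bounded by $[w]_{A_p(\mathcal{R})}$, grouping the coordinates in pairs (your exponent count is correct), and the fact that intersecting members of a dyadic antichain cross. The gap is that the two-dimensional improvement, which is the entire content of the theorem, is never supplied. You name the weighted antichain overlap estimate as the main obstacle and leave it open.

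Worse, that estimate is false for level-set antichains even when $w\equiv1$. Let $f$ be a sum of $2^N$ bumps of mass $2^{-N}$, each spread uniformly over a dyadic square of side $2^{-N-2}$ containing a point of a binary $(0,N,2)$-net in $[0,1)^2$, and take the level $1$. Every dyadic rectangle in $[0,1)^2$ of area at least $2^{-N}$ has average exactly $1$. Each of the $(N+2)2^N$ dyadic rectangles of area $2^{-N-1}$ containing a net point has average $2$ and is maximal. Meanwhile the level set $\Omega$ has measure at most $3/2$. Hence the maximal antichain $\mathcal{A}$ has $\sum_{R\in\mathcal{A}}|R|\geq(N+2)/2$, and Cauchy--Schwarz gives $\int(\sum_{R\in\mathcal{A}}\ind_R)^2\geq(\sum_{R\in\mathcal{A}}|R|)^2/|\Omega|\gtrsim N\sum_{R\in\mathcal{A}}|R|$.

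Your first attempt fails for the same reason. Here an $R\in\mathcal{A}$ with side lengths $2^{-a}\times 2^{-b}$ crosses $2^{t-1}$ members $R'$ with $|R\cap R'|=2^{-t}|R|$, for each $1\leq t\leq b$. So there is no geometric series in $|R\cap R'|/|R|$ to sum, whatever decay the one-dimensional $A_\infty$ property provides. A genuinely two-parameter selection must come first. It is not clear that a $\sigma$-principal stopping time, a one-parameter device, supplies one.

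The paper proceeds as follows.
\begin{itemize}
\item Theorem~\ref{p2_variational_selection} and Corollary~\ref{p2_weighted_selection} extract a $(P_2)$ subfamily $\mathcal{F}$ with $w(\Omega)\lesssim[w]\,w(\sh(\mathcal{F}))$. After the $A_p$ step and duality, the task becomes the dual testing inequality $\int(\sum_R c_R\ind_R)^q\,d\sigma\leq B\sum_R c_R^q\,\sigma(R)$ with $q=p'$, not a quadratic overlap bound for $w$.
\item Theorem~\ref{pwi:sparse_intersections} shows that the sets $E(P,Q)=F_1\times F_2$, $P\preceq Q$, partition $\sh(\mathcal{F})$ and satisfy $|E(P,Q)|\geq\frac14|P\cap Q|$. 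Moreover $\sum_R c_R\ind_R=\sum_{R\in[P,Q]}c_R$ on $E(P,Q)$, which turns the problem into one about sums over chain intervals.
\item Lemma~\ref{lem:interval_mass} and H\"older's inequality then yield the self-improving bound $X\lesssim([w]^{q-1}X)^{\frac{q-1}{q+1}}Y^{\frac{2}{q+1}}$. Hence $B\lesssim[w]^{(q-1)^2/2}$, and this is the actual source of the halved loss $\frac{1}{2p(p-1)}$.
\item The strong bound is obtained not by a direct level-set summation but by Buckley's route. It uses the openness and reverse H\"older property of $A_p(\mathcal{R})$ weights from \cite{LPR2017} together with Marcinkiewicz interpolation, which adds $\frac{1}{p(p-1)}$.
\end{itemize}
Consequently the weak-type exponent that matches $\alpha_2$ is $\frac1p+\frac1{p-1}-\frac{1}{2p(p-1)}$. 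Your sanity-check exponent $\frac1{p-1}+\frac1p-\frac1{2p}$ agrees with it only at $p=2$.
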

In particular, when $d=2$ and $p=2$, the strong-type exponent improves
from $2$ to $7/4$.

The main idea in the proof of this theorem, instead of iterating the
one-dimensional maximal function, is to exploit the geometry of pairwise
intersections of antichains of two-dimensional dyadic rectangles.
We first use the $\frac13$-trick to reduce to the dyadic setting,
in particular one can dominate the strong maximal function by a sum
of a finite number of maximal functions associated to rectangles belonging
to different dyadic grids.
See, for example, \cite{LiPipherWard2015}*{Theorem 6.1(ii)}.
In Section~2, the estimates are formulated for fixed product dyadic grids
and their associated dyadic weight characteristics. They are uniform
in the choice of grid. Since each dyadic characteristic is bounded by
$[w]_{A_p(\mathcal{R})}$, the dyadic reduction changes only the
dimensional constants.

Instead of directly attacking the strong $L^p$ bound,
we follow Buckley's proof of the sharp weighted bounds for the (one-parameter) maximal function \cite{Buckley1993}: we first prove
a weighted weak-type inequality and then,
using the reverse H\"older property for strong $A_p(\mathcal{R})$ weights found in \cite{LPR2017}, interpolate this to obtain Theorem \ref{MainTheorem}.

The benefit of working in the weak-type setting is that we can assume that the covering
of the level set of the strong maximal function is an \emph{antichain},
that is: no rectangle is contained in another.
This, together with a selection theorem inspired by C\'ordoba
and Fefferman's proof of the strong maximal theorem in \cite{CF1975},
allows us to work with a \eqref{P2} family of dyadic rectangles while only paying by a multiplicative factor of $[w]_{A_p(\mathcal{R})}$ in the weighted
measure of the level set.

We then show that the collection of all pairwise intersections of a \eqref{P2} family of dyadic two-dimensional rectangles is sparse.
These pairwise intersections have disjoint parts which carry more
geometric information than what one can usually assume from a generic
sparse collection.

Finally, grouping the coordinates in pairs gives the improvement in every dimension.

The weak-type result mentioned above is the following.
\begin{theorem}\label{MainWeakTheorem}
  Let $1<p<\infty$, $d\geq 2$, and $w\in A_p(\mathcal{R})$. Then
  \begin{align*}
    \|\mathcal{M}_{\mathcal{R}}f\|_{L^{p,\infty}(w)}
    \lesssim_{p,d} [w]_{A_p(\mathcal{R})}^{\beta}
    \|f\|_{L^p(w)},
  \end{align*}
  where
  \begin{align*}
    \beta = \frac1p + \frac{d-1}{p-1}
      - \frac{\Big\lfloor \dfrac{d}{2} \Big\rfloor}{2p(p-1)}.
  \end{align*}
\end{theorem}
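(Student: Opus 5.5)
We reduce to a single product dyadic grid $\mathcal{D}=\mathcal{D}^1\times\cdots\times\mathcal{D}^d$ using the $\frac13$-trick, as described above. It then suffices to prove the weak-type bound for the dyadic strong maximal function $M_{\mathcal{D}}$, with the dyadic characteristic $[w]_{A_p(\mathcal{D})}\le[w]_{A_p(\mathcal{R})}$ and constants uniform in the grid. We may assume $f\ge 0$ and fix $\lambda>0$. The level set $\{M_{\mathcal{D}}f>\lambda\}$ is covered by the maximal dyadic rectangles $R$ with $\langle f\rangle_R>\lambda$. After a standard truncation (finitely many scales), this family $\mathcal{A}$ is an antichain. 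The goal is to show
\begin{align*}
  \lambda^p\, w\Big(\bigcup_{R\in\mathcal{A}}R\Big) \lesssim [w]_{A_p}^{p\beta}\|f\|_{L^p(w)}^p .
\end{align*}

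\textbf{Step 1 (selection).} Run a Córdoba--Fefferman type selection on $\mathcal{A}$, ordered by decreasing size of one side (or lexicographically). A rectangle is selected only if its intersection with the union of the previously selected ones has small relative measure. Non-selected rectangles are contained in the set where the strong maximal function of the indicator of the selected union exceeds $1/2$. To control $w$ of the full union by $w$ of the selected union, with the loss of only one power of $[w]_{A_p}$ claimed in the introduction, I would use the $A_p$ condition on each unselected rectangle $R$: $w(R)\lesssim [w]_{A_p}(|R\cap E|/|R|)^{-p}w(R\cap E)$. This gives a subfamily $\widetilde{\mathcal{A}}$ with property \eqref{P2}, meaning an antichain whose rectangles have bounded overlap in an exponential-integrability sense, and
\[
 w\Big(\bigcup\mathcal{A}\Big)\lesssim [w]_{A_p}\, w\Big(\bigcup\widetilde{\mathcal{A}}\Big).
\]

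\textbf{Step 2 (two-parameter geometry).} Group the $d$ coordinates into $\lfloor d/2\rfloor$ pairs, plus possibly one leftover coordinate. Within a single pair we use the key geometric fact that the pairwise intersections $R\cap R'$ of a \eqref{P2} family of planar dyadic rectangles form a sparse collection, each with a disjoint portion $E_{R\cap R'}$ of comparable measure. On the selected family, estimate
\[
 \lambda\, w\Big(\bigcup\widetilde{\mathcal{A}}\Big)\le\sum_{R}\int_R f\,\frac{w(R)}{|R|}\cdots
\]
by duality, writing $\int f\,\ind_{\cup} w$ and testing against $\sigma$. Then split each $R$ into its own disjoint part and its overlaps with other members. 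The overlap terms run over the sparse family of intersections, so a sparse-operator bound applies: the sparse operator has weighted norm $[w]^{\max(1,1/(p-1))}$ type, with the finer $A_p$--$A_\infty$ mixed form. Using the disjoint parts with the extra geometric information, one gains a square root in this pair relative to iterating two one-dimensional maximal functions. Heuristically, in one pair the cost drops from $[w]^{2/(p-1)}$ to $[w]^{2/(p-1)-1/(2p(p-1))}$ at the $L^p$-level. The remaining coordinates, those in the other pairs and the leftover one, are handled by iterating one-dimensional dyadic maximal operators in the slice variables, each costing $[w]^{1/(p-1)}$. Pairs beyond the first are treated the same way by a tensor/iteration argument, each yielding the same gain. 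Collecting exponents gives $1/p$ from the weak-type base, $(d-1)/(p-1)$ from the iterations, and $-\lfloor d/2\rfloor/(2p(p-1))$ from the gains, which is $\beta$.

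\textbf{Main obstacle.} The hard step is Step 2. One must prove sparseness of the pairwise intersections and then exploit the disjoint parts to beat the trivial sparse bound by exactly the factor $[w]^{1/(2p(p-1))}$. I would first try a Cauchy--Schwarz in the sum over pairs $(R,R')$, bounding $\sum_{R'}\ind_{R\cap R'}$ via \eqref{P2}. I would then use the $A_\infty$ (Fujii--Wilson) characteristic of $\sigma$ on the disjoint parts, and optimize the splitting threshold in the overlap count to balance the two terms. That balancing should produce the square-root gain.
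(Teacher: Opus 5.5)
Your proposal has the right overall architecture (dyadic reduction, antichain, a $(P_2)$-type selection, sparse pairwise intersections, grouping coordinates in pairs). However, both substantive steps have genuine gaps, and the mechanism that actually produces the gain is missing.

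\textbf{Step 1 does not deliver one power of $[w]_{A_p}$.} The per-rectangle inequality $w(R)\lesssim[w]_{A_p}(|R\cap E|/|R|)^{-p}w(R\cap E)$ cannot be summed over the unselected rectangles. They overlap, and rectangles admit no Vitali-type disjoint subcover. Controlling $w(\{\mathcal{M}\ind_E>1/2\})$ by $[w]_{A_p}w(E)$ for the strong maximal function is itself a restricted weak-type estimate that you do not prove; iterating one-dimensional operators gives $[w]_{A_p}^2$.

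Your greedy selection has a second problem. Along one ordering it only guarantees that each selected rectangle has half its measure outside the union of the \emph{earlier} ones. That underlies the C\'ordoba--Fefferman exponential-integrability estimate, but it is not \eqref{P2}, which requires $|R\cap\sh(\mathcal{F}\setminus\{R\})|\leq\frac12|R|$ with respect to \emph{all} other members. For a planar dyadic antichain ordered by decreasing first side, it controls $E_1(R;\mathcal{G})$ but not $E_2(R;\mathcal{G})$. The members of $\xi_2(R;\mathcal{G})$ meeting $R$ have strictly shorter first side, so they come later. Theorem~\ref{pwi:sparse_intersections} needs both. A second pass in the other direction costs another multiplicative factor, which is exactly what the paper avoids.

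The paper resolves both issues, and only in dimension two:
\begin{itemize}
\item The minimizer of $\Phi(\mathcal{F})=\frac12\mass(\mathcal{F})-|\sh(\mathcal{F})|$ (Theorem~\ref{p2_variational_selection}) yields two-sided \eqref{P2} and the covering property in one pass.
\item Selected rectangles meeting an unselected $R$ in a dyadic antichain are $\preceq$-comparable with $R$, so they are full-height or full-width strips of $R$.
\item Hence $R\subseteq\{M_i\ind_{\sh(\mathcal{G})}\geq 1/4\}$ for a \emph{one-dimensional} directional operator $M_i$, whose weak type on slices costs a single $[w]_{A_p}$.
\end{itemize}
None of this geometry is available for a $d$-dimensional antichain, so running the selection on your full family $\mathcal{A}$ is not viable.

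\textbf{Step 2 is only a heuristic, and it is where the theorem lives.} A generic sparse-operator bound does not produce the gain, and its usual proof relies on one-parameter nestedness that sparse families of rectangles lack. The paper's mechanism is specific:
\begin{itemize}
\item By duality it suffices to prove $\int(\sum_R c_R\ind_R)^q\,d\sigma\leq B\sum_R c_R^q\sigma(R)$ with $q=p'$.
\item The sets $E(P,Q)$ partition $\sh(\mathcal{G})$, and the rectangles containing a point of $E(P,Q)$ are exactly the chain $[P,Q]$. So both sides become sums over chains, $X$ and $Y$.
\item Apply H\"older with exponent $2/(q+1)$ and the interval-mass Lemma~\ref{lem:interval_mass}. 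Then exchange the order of summation via \eqref{pwi:cover} and use $\sigma(R\cap S)\lesssim[w]_{A_p}^{q-1}\sigma(E(R,S))$.
\item This gives $X\lesssim([w]_{A_p}^{q-1}X)^{\frac{q-1}{q+1}}Y^{\frac{2}{q+1}}$, i.e.\ $B\lesssim[w]_{A_p}^{(q-1)^2/2}$. That is half the exponent $(q-1)^2$, which would merely reproduce the bound of \cite{LPR2017}.
\end{itemize}
Nothing in your Cauchy--Schwarz/threshold-balancing sketch is shown to reach this exponent.

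\textbf{The higher-dimensional bookkeeping is incomplete.} For $d\geq3$, every pair beyond the first must contribute the improved \emph{strong-type} bound of Theorem~\ref{strong:dyadic:2dim}. The paper derives it from the two-dimensional weak-type bound via the quantitative openness (reverse H\"older) of $A_p(\mathcal{R}_2)$ from \cite{LPR2017} and Marcinkiewicz interpolation. It then uses the pointwise domination of $\mathcal{M}_{\mathcal{R}_d}$ by a composition of two-dimensional operators, with the weak-type one outermost. Your proposal omits this. It also says the other pairs are handled by iterating one-dimensional operators, which gives no gain and would not yield $\beta$.
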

For comparison, Luque, P\'erez, and Rela
\cite{LPR2017}*{equation (4.10)} prove the upper bound
\begin{align*}
  \|\mathcal{M}_{\mathcal{R}}f\|_{L^{p,\infty}(w)}
  \lesssim_{p,d}
  [w]_{A_p(\mathcal{R})}^{\frac1p+\frac{d-1}{p-1}}
  \|f\|_{L^p(w)}.
\end{align*}
Thus Theorem~\ref{MainWeakTheorem} improves the exponent by
$\lfloor d/2\rfloor/(2p(p-1))$ in every dimension $d\geq2$.
In particular, when $d=2$ and $p=2$, the weak-type exponent improves
from $3/2$ to $5/4$.

These theorems are proved at the end of the next section.

\section{The weak-type estimate and its applications}
In this section we work with a fixed but arbitrary family
of one-dimensional dyadic grids $\{\mathcal{D}_i\}_{i=1}^d$.
The collection $\mathcal{R}_d$ of dyadic rectangles with respect to this
family is the one formed by the cartesian products of dyadic intervals in each grid
\begin{align*}
  \mathcal{R}_d = \Bigg\{ \prod_{i=1}^d I_i:\, I_i \in \mathcal{D}_i \Bigg\}.
\end{align*}

One can define the strong $A_p$ constant associated to such a family
as follows
\begin{align*}
  [w]_{A_p(\mathcal{R}_d)} = \sup_{R \in \mathcal{R}_d}
    \langle w \rangle_R \langle \sigma \rangle_R^{p-1},
\end{align*}
where, as before, $\sigma = w^{1-p'}$.

For $0<\eta\leq1$, a family $\mathcal{F}$ is called
\emph{$\eta$-sparse} if there are pairwise disjoint measurable sets
$E_R\subseteq R$, indexed by $R\in\mathcal{F}$, such that
$|E_R|\geq\eta|R|$ for every $R\in\mathcal{F}$.
A stronger notion was introduced by C\'ordoba and Fefferman in
\cite{CF1975}. To define it, let us introduce first the \emph{shadow}
of a collection.

Given a collection $\mathcal{E}$ of dyadic rectangles, define its
shadow by
\begin{align*}
  \sh(\mathcal{E}) := \bigcup_{R \in \mathcal{E}} R.
\end{align*}

\begin{definition}
  We say that a family $\mathcal{F}$ is \eqref{P2} if, for every $R\in\mathcal{F}$,
  \begin{align} \tag{$P_2$} \label{P2}
    |R \cap \sh(\mathcal{F} \setminus \{R\})| \leq \frac{1}{2}|R|.
  \end{align}
\end{definition}

C\'ordoba and R. Fefferman showed in \cite{CF1975} that every family $\mathcal{E}$ of rectangles contains
a \eqref{P2} subcollection $\mathcal{F}$ such that
\begin{align*}
|\sh(\mathcal{E})| \lesssim |\sh(\mathcal{F})|.
\end{align*}
We shall need a weighted variant of this selection,
one in which we obtain
\begin{align*}
w(\sh(\mathcal{E})) \leq C_w w(\sh(\mathcal{F})).
\end{align*}
The original proof in \cite{CF1975} proceeded in two passes.
In each pass one picks up a multiplicative constant comparable
to the weak-type norm of the strong maximal function.
In the weighted setting this produces a suboptimal power of the $A_p(\mathcal{R}_d)$ constant.
In \cite{Rey2026} a \emph{one-pass} method was introduced which bypasses
one of the multiplicative losses described above.
For the sake of completeness, we reproduce it here.
We note in passing that the following result does not need the
rectangles to be dyadic.
\begin{theorem} \label{p2_variational_selection}
  Let $\mathcal{E}$ be a finite collection of rectangles.
  There exists a \eqref{P2} subcollection $\mathcal{G} \subseteq \mathcal{E}$ such
  that for every $R \in \mathcal{E} \setminus \mathcal{G}$ we have
  \begin{align*}
    |R \cap \sh(\mathcal{G})| \geq \frac{1}{2}|R|.
  \end{align*}
\end{theorem}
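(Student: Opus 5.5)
The plan is to run a local-search (add/remove) procedure on subcollections of $\mathcal{E}$ and to show that it terminates by exhibiting a functional that strictly increases at every step. Since $\mathcal{E}$ is finite, there are only finitely many subcollections, so a strictly increasing potential forces termination. Both conclusions of the theorem will then be read off from the stopping condition.

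\textbf{The procedure.} Start with $\mathcal{G}=\emptyset$ and repeat the following two moves in any order, for as long as either one applies.
\begin{enumerate}
\item[(A)] If some $R\in\mathcal{E}\setminus\mathcal{G}$ satisfies $|R\cap\sh(\mathcal{G})|<\frac12|R|$, replace $\mathcal{G}$ by $\mathcal{G}\cup\{R\}$.
\item[(B)] If some $S\in\mathcal{G}$ satisfies $|S\cap\sh(\mathcal{G}\setminus\{S\})|>\frac12|S|$, replace $\mathcal{G}$ by $\mathcal{G}\setminus\{S\}$.
\end{enumerate}
If neither move applies, $\mathcal{G}$ has the required properties. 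The failure of (B) is exactly the \eqref{P2} condition for $\mathcal{G}$. The failure of (A) is exactly the inequality $|R\cap\sh(\mathcal{G})|\ge\frac12|R|$ for every $R\in\mathcal{E}\setminus\mathcal{G}$. It is therefore important that the two threshold inequalities are complementary, one strict and one non-strict.

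\textbf{Termination.} This is the main point, since the naive potential $|\sh(\mathcal{G})|$ does not work: a single addition may trigger many removals. Instead I would use
\begin{align*}
\Phi(\mathcal{G}) = 2|\sh(\mathcal{G})| - \sum_{S\in\mathcal{G}}|S|
 = \int \Big( 2\,\ind_{\sh(\mathcal{G})} - \sum_{S\in\mathcal{G}}\ind_S \Big).
\end{align*}
Each move strictly increases $\Phi$.
\begin{itemize}
\item Under (A), the shadow grows by $|R\setminus\sh(\mathcal{G})|>\frac12|R|$ and the sum grows by $|R|$. Hence $\Delta\Phi = 2|R\setminus\sh(\mathcal{G})|-|R|>0$.
\item Under (B), the shadow loses exactly $|S\setminus\sh(\mathcal{G}\setminus\{S\})|<\frac12|S|$ and the sum loses $|S|$. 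Hence $\Delta\Phi = |S|-2|S\setminus\sh(\mathcal{G}\setminus\{S\})|>0$.
\end{itemize}
Because $\Phi$ takes only finitely many values on subcollections of $\mathcal{E}$, no subcollection can repeat, and the procedure stops after finitely many steps.

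\textbf{Remaining checks.} I would verify that the measure identities above are exact. Adding $R$ enlarges the shadow precisely by $R\setminus\sh(\mathcal{G})$, and removing $S$ shrinks it precisely by $S\setminus\sh(\mathcal{G}\setminus\{S\})$. I would also confirm that nothing in the argument uses dyadic structure, or any structure at all beyond finiteness and measurability of the sets. This is consistent with the remark preceding the theorem.

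The only genuinely nontrivial step is the choice of the potential $\Phi$. Once the weighting is chosen as "twice the shadow minus total mass", each move gains more than half of the affected rectangle's measure on one side and loses less than half on the other, which is exactly what makes both moves strictly increase $\Phi$.
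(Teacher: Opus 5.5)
Your proposal is correct and is essentially the paper's argument. Your potential $2|\sh(\mathcal{G})| - \sum_{S\in\mathcal{G}}|S|$ is exactly $-2$ times the paper's functional $\Phi(\mathcal{F}) = \frac12\mass(\mathcal{F}) - |\sh(\mathcal{F})|$, and both conclusions come from the same single-addition and single-removal comparisons. The only difference is that the paper directly takes a global minimizer of $\Phi$ over the finitely many subcollections, so it needs no termination argument; any optimum that is local under these moves, such as the one your procedure reaches, works equally well.
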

\begin{proof}
  For every $\mathcal{F}$,
  let $\mass(\mathcal{F}) = \sum_{R \in \mathcal{F}} |R|$,
  and define the functional
  \begin{align*}
    \Phi(\mathcal{F}) = \frac{1}{2} \mass(\mathcal{F}) - |\sh(\mathcal{F})|.
  \end{align*}
  
  Among the \emph{finite} collection of all subcollections of $\mathcal{E}$,
  choose one that minimizes $\Phi$, call it $\mathcal{G}$.

  Let $R \in \mathcal{G}$ and set $\mathcal{G}' = \mathcal{G} \setminus \{R\}$.
  Observe that we have
  \begin{align*}
    \mass(\mathcal{G}) = \mass(\mathcal{G}') + |R|
    \quad\text{and}\quad
    |\sh(\mathcal{G})| = |\sh(\mathcal{G}')| + |R \setminus \sh(\mathcal{G}')|.
  \end{align*}
  As a consequence, we can write $\Phi(\mathcal{G}) = \Phi(\mathcal{G}') + \frac{1}{2}|R| - |R \setminus \sh(\mathcal{G}')|$.
  Using the minimality of $\mathcal{G}$:
  \begin{align*}
    \Phi(\mathcal{G}) = \Phi(\mathcal{G}') + \frac{1}{2}|R| - |R \setminus \sh(\mathcal{G}')| \leq \Phi(\mathcal{G}').
  \end{align*}
  In other words
  \begin{align*}
    |R \cap \sh(\mathcal{G}')| \leq \frac{1}{2}|R|,
  \end{align*}
  which is the \eqref{P2} condition.

  Now let $R \in \mathcal{E} \setminus \mathcal{G}$.
  Comparing $\mathcal{G}$ with $\mathcal{G} \cup \{R\}$, minimality gives
  \begin{align*}
    0 &\leq \Phi(\mathcal{G} \cup \{R\}) - \Phi(\mathcal{G}) \\
      &= \frac{1}{2}|R| - |R \setminus \sh(\mathcal{G})| \\
      &= |R \cap \sh(\mathcal{G})| - \frac{1}{2}|R|.
  \end{align*}
  Thus $|R \cap \sh(\mathcal{G})| \geq \frac{1}{2}|R|$, as required.
\end{proof}

\begin{corollary}\label{p2_weighted_selection}
  Let $\mathcal{E}$ be a finite collection of dyadic rectangles in dimension two, and suppose $w \in A_p(\mathcal{R}_2)$.
  There exists a subcollection $\mathcal{G} \subseteq \mathcal{E}$
  satisfying the \eqref{P2} condition and
  \begin{align*}
    w(\sh(\mathcal{E})) \leq 2^{1+2p}[w]_{A_p(\mathcal{R}_2)} w(\sh(\mathcal{G})).
  \end{align*}
\end{corollary}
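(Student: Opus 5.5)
Apply Theorem~\ref{p2_variational_selection} to $\mathcal{E}$ to obtain a \eqref{P2} subcollection $\mathcal{G}$ with $|R\cap\sh(\mathcal{G})|\geq\frac12|R|$ for every $R\in\mathcal{E}\setminus\mathcal{G}$. The plan is to control $w(\sh(\mathcal{E}))$ by the $w$-measure of a level set of a dyadic strong maximal function applied to $\ind_{\sh(\mathcal{G})}$, with a constant linear in $[w]_{A_p(\mathcal{R}_2)}$. Set $E=\sh(\mathcal{G})$. Every $R\in\mathcal{E}$ satisfies $|R\cap E|\geq\frac12|R|$: for $R\in\mathcal{G}$ this is trivial, and for $R\notin\mathcal{G}$ it is the selection property. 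Hence
\begin{align*}
  \sh(\mathcal{E})\subseteq \bigcup_{R\in\mathcal{E}} R, \qquad \text{with } \langle \ind_E\rangle_R\geq \tfrac12 \text{ for each } R\in\mathcal{E}.
\end{align*}

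Next I will convert the Lebesgue density condition into a weighted one on each individual rectangle, using only the $A_p$ condition on that rectangle and no maximal-function bound. For $R\in\mathcal{E}$, H\"older's inequality gives
\begin{align*}
  \frac{|R\cap E|}{|R|}=\frac1{|R|}\int_R \ind_E w^{1/p}w^{-1/p}
  \leq \Big(\frac{w(R\cap E)}{|R|}\Big)^{1/p}\langle\sigma\rangle_R^{1/p'} .
\end{align*}
Raising to the power $p$ yields
\begin{align*}
  2^{-p}\leq \frac{w(R\cap E)}{w(R)}\langle w\rangle_R\langle\sigma\rangle_R^{p-1}
  \leq [w]_{A_p(\mathcal{R}_2)}\frac{w(R\cap E)}{w(R)},
\end{align*}
so $w(R)\leq 2^p[w]_{A_p(\mathcal{R}_2)}\,w(R\cap E)$ for every $R\in\mathcal{E}$.

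The main obstacle is summing these estimates. The rectangles in $\mathcal{E}$ overlap, so summing $w(R)$ over all of $\mathcal{E}$ overcounts. I will instead cover $\sh(\mathcal{E})$ by a controlled subfamily. First, reduce $\mathcal{E}$ to its maximal elements, which does not change the shadow. The $R\in\mathcal{G}$ already lie inside $E$ and contribute at most $w(E)$, so only $\mathcal{E}\setminus\mathcal{G}$ needs treatment. Summing naively over that family produces a factor equal to its overlap, and this is where the argument can fail.

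To avoid summing, I will repeat the variational argument in weighted form. The first attempt is to run the proof of Theorem~\ref{p2_variational_selection} with Lebesgue measure replaced by $w$ in the shadow term only, that is, to minimize
\begin{align*}
  \Phi_w(\mathcal{F}) = \tfrac12\mass(\mathcal{F}) - c\,w(\sh(\mathcal{F}))/\text{(scale)}.
\end{align*}
If this does not work cleanly, the fallback is to note that in the claimed constant $2^{1+2p}=2\cdot4^p$ the factor $4^p$ suggests applying the H\"older step at density $\frac14$. Concretely, I will bound $w(\sh(\mathcal{E})\setminus E)$ by $w(\{M_{\mathcal{R}_2}\ind_E\geq\frac12\}\setminus E)$ and use the fact that, for the dyadic strong maximal function restricted to a single level, the weak-type $(p,p)$ bound at a fixed level with $\|\ind_E\|_{L^p(w)}^p=w(E)$ only needs testing on the rectangles themselves. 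This is the step I expect to require the most care, in particular a one-pass covering that avoids overlap losses. The extra factor $2$ in the constant then comes from adding the contribution $w(E)$ of $\mathcal{G}$ itself.
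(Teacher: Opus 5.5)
Your H\"older step giving $w(R)\leq 2^p[w]_{A_p(\mathcal{R}_2)}\,w(R\cap E)$ for each single rectangle is correct. You also correctly locate the difficulty in summing this over overlapping rectangles. But the proposal never overcomes that difficulty. The weighted functional $\Phi_w$ is only named, not analyzed, and minimizing a mixture of Lebesgue mass and weighted shadow would not obviously return the Lebesgue \eqref{P2} condition the corollary asserts. The fallback is circular: ``testing the weak-type bound on the rectangles themselves'' gives back exactly your per-rectangle inequality. Passing from that to the union $\bigcup_{R}R$ is the same overlap problem you set out to avoid. In two parameters the set $\{\mathcal{M}_{\mathcal{R}_2}\ind_E\geq\frac12\}$ is not a disjoint union of maximal rectangles, and there is no covering lemma controlling the overlaps in $L^1(w)$. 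A bound $w(\{\mathcal{M}_{\mathcal{R}_2}\ind_E\geq\frac12\})\lesssim[w]_{A_p(\mathcal{R}_2)}w(E)$ for arbitrary $E$ is not something you can quote (iterating one-dimensional bounds gives a higher power of $[w]_{A_p(\mathcal{R}_2)}$), and you give no argument for it.

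The missing idea is to use the special geometry of $E=\sh(\mathcal{G})$ relative to each discarded rectangle, which reduces everything to one-parameter operators.
\begin{itemize}
\item Pass to the maximal elements $\mathcal{F}$ of $\mathcal{E}$ \emph{before} applying Theorem~\ref{p2_variational_selection}. Then each $R\in\mathcal{F}\setminus\mathcal{G}$ is incomparable with every $S\in\mathcal{G}$.
\item Dyadicity then forces each $S\in\mathcal{G}$ meeting $R$ to cross $R$ as a full strip: either $\pi_1(S)\subseteq\pi_1(R)$ and $\pi_2(S)\supseteq\pi_2(R)$, or the reverse.
\item Split these rectangles into the two strip families and pigeonhole: one family covers at least $\frac14|R|$. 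Its members span $R$ in one direction, so every slice of $R$ in the other direction has density at least $\frac14$. Hence $R\subseteq\{M_i\ind_E\geq\frac14\}$, where $M_i$ is the one-dimensional dyadic maximal operator in some direction $i$.
\item Now overlap is harmless. On each slice the level set is a disjoint union of maximal dyadic intervals. Your H\"older argument applies to each of them with the sliced weight, whose one-dimensional $A_p$ constant is at most $[w]_{A_p(\mathcal{R}_2)}$ for a.e.\ slice. Integrating gives $w(\{M_i\ind_E\geq\frac14\})\leq 4^p[w]_{A_p(\mathcal{R}_2)}w(E)$.
\item Since $\sh(\mathcal{G})$ also lies in these level sets, summing over $i=1,2$ gives the constant $2\cdot4^p=2^{1+2p}$.
\end{itemize}
Your guess that $4^p$ comes from density $\frac14$ is right. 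The factor $2$, however, comes from the two directions, not from adding $w(E)$ for the members of $\mathcal{G}$.
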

\begin{proof}
  Let $\mathcal{F}$ consist of the elements in $\mathcal{E}$ which are maximal by inclusion, thus $\mathcal{F}$ is an antichain and
  \begin{align*}
    \sh(\mathcal{E}) = \sh(\mathcal{F}).
  \end{align*}

  Now let $\mathcal{G}$ be the \eqref{P2} subcollection of $\mathcal{F}$
  provided by Theorem \ref{p2_variational_selection},
  and let $\mathcal{B} = \mathcal{F} \setminus \mathcal{G}$.
  Fix $R \in \mathcal{B}$
  and define
  \begin{align*}
    \mathcal{G}_i = \{S \in \mathcal{G}: S \cap R \neq \emptyset \text{ and }  \pi_i(S) \subseteq \pi_i(R)\}.
  \end{align*}
  Since $\mathcal{F}$ is an antichain of two-dimensional rectangles,
  every rectangle $S \in \mathcal{G}_1$ also satisfies
  \begin{align*}
    \pi_2(S) \supseteq \pi_2(R).
  \end{align*}
  Since all the rectangles involved are dyadic, we have
  \begin{align*}
    \frac{1}{2}|R| \leq |R \cap \sh(\mathcal{G})| &\leq |R \cap \sh(\mathcal{G}_1)|
    + |R \cap \sh(\mathcal{G}_2)|.
  \end{align*}
  Therefore, there exists an $i \in \{1,2\}$ such that
  \begin{align*}
    \frac{1}{4} |R| \leq |R \cap \sh(\mathcal{G}_i)|.
  \end{align*}
  Let $M_i$ be the dyadic maximal function along the $i$-th direction:
  \begin{align*}
    M_1 f(x,y) &:= \sup_{I \in \mathcal{D}_1} \ind_I(x) \frac{1}{|I|} \int_I |f(s,y)| \, ds, \\
    M_2 f(x,y) &:= \sup_{J \in \mathcal{D}_2} \ind_J(y) \frac{1}{|J|} \int_J |f(x,t)| \, dt.
  \end{align*}
  If $\frac{1}{4} |R| \leq |R \cap \sh(\mathcal{G}_i)|$, then
  \begin{align*}
    R \subseteq \{ M_i(\ind_{\sh(\mathcal{G}_i)}) \geq \frac{1}{4} \};
  \end{align*}
  therefore
  \begin{align*}
    \sh(\mathcal{B}) \subseteq \{ M_1(\ind_{\sh(\mathcal{G})}) \geq \frac{1}{4} \} \cup
      \{ M_2(\ind_{\sh(\mathcal{G})}) \geq \frac{1}{4} \}.
  \end{align*}
  Since $M_i(\ind_{\sh(\mathcal{G})})=1$ on $\sh(\mathcal{G})$
  for $i\in\{1,2\}$, the same union also contains $\sh(\mathcal{F})$.
  The dyadic $A_p$ characteristics of almost every coordinate slice of $w$
  are bounded by $[w]_{A_p(\mathcal{R}_2)}$. Applying the one-dimensional
  dyadic weak-type bounds on these slices and integrating, we obtain
  \begin{align*}
    w(\sh(\mathcal{E})) = w(\sh(\mathcal{F}))
    &\leq \sum_{i=1}^2 w\big(\{M_i(\ind_{\sh(\mathcal{G})})\geq 1/4\}\big) \\
    &\leq 2^{1+2p}[w]_{A_p(\mathcal{R}_2)} w(\sh(\mathcal{G})),
  \end{align*}
  and the claim follows. The directional strip structure is illustrated
  in Figure~\ref{fig:directional_strips}.
\end{proof}

\begin{figure}[htpb]
  \centering
  \begin{tikzpicture}[x=0.5cm,y=0.5cm,
    lbl/.style={font=\small,inner sep=1.25pt},
    ptr/.style={->,thick,shorten >=2pt}]
    \begin{scope}
      \node[lbl] at (2.5,9) {(a) Vertical strips};
      \foreach \x/\h in {0/6,2/8}{
        \fill[figteal,fill opacity=0.3] (\x,0) rectangle (\x+1,\h);
        \draw[figteal,very thick] (\x,0) rectangle (\x+1,\h);
      }
      \draw[gray!80,very thick] (0,0) rectangle (4,4);
      \node[gray!80,lbl] at (4.4,4.3) {$R$};
      \node[figteal,lbl] (G1) at (4.7,6.5) {$\mathcal{G}_1$};
      \draw[ptr,figteal!65] (G1) to[bend left=15] (2.55,6.1);
      \draw[gray!65,thin,<->] (-0.8625,1.4) -- (4.8625,1.4);
      \draw[figteal,very thick] (0,1.4) -- (1,1.4);
      \draw[figteal,very thick] (2,1.4) -- (3,1.4);
      \fill (3.55,1.4) circle[radius=1.3pt];
      \node[lbl,anchor=west] (XY1) at (5.1,2.7) {$(x,y)$};
      \draw[ptr,gray!65] (XY1) to[bend right=15] (3.55,1.4);
      \node[lbl,gray!75] at (2.4,-0.75) {horizontal slice};
      \node[lbl] at (2.4,-1.65)
        {$M_1(\ind_{\sh(\mathcal{G}_1)})(x,y)\geq\frac12$};
    \end{scope}
    \begin{scope}[shift={(9,0)}]
      \node[lbl] at (4,9) {(b) Horizontal strips};
      \foreach \y/\w in {0/6,2/8}{
        \fill[figplum,fill opacity=0.3] (0,\y) rectangle (\w,\y+1);
        \draw[figplum,very thick] (0,\y) rectangle (\w,\y+1);
      }
      \draw[gray!80,very thick] (0,0) rectangle (4,4);
      \node[gray!80,lbl] at (4.45,4.3) {$R$};
      \node[figplum,lbl] (G2) at (6.75,4.6) {$\mathcal{G}_2$};
      \draw[ptr,figplum!65] (G2) to[bend left=15] (6.4,2.55);
      \draw[gray!65,thin,<->] (1.4,-0.8625) -- (1.4,4.8625);
      \draw[figplum,very thick] (1.4,0) -- (1.4,1);
      \draw[figplum,very thick] (1.4,2) -- (1.4,3);
      \fill (1.4,3.55) circle[radius=1.3pt];
      \node[lbl] (XY2) at (2.75,5.35) {$(x,y)$};
      \draw[ptr,gray!65] (XY2) to[bend left=15] (1.4,3.55);
      \node[lbl,gray!75] at (4,-0.75) {vertical slice};
      \node[lbl] at (4,-1.65)
        {$M_2(\ind_{\sh(\mathcal{G}_2)})(x,y)\geq\frac12$};
    \end{scope}
  \end{tikzpicture}
  \caption{The directional strip structure in the weighted selection argument.
    Members of $\mathcal{G}_1$ span the height of $R$, while members of
    $\mathcal{G}_2$ span its width. In this example, each family occupies
    half of $R$. Every parallel slice has the same covered proportion,
    giving the displayed lower bounds throughout $R$.
    The unequal extensions beyond $R$ are drawn schematically.}
  \label{fig:directional_strips}
\end{figure}

For the next theorem, and further arguments later, we will use the following
partial order on dyadic rectangles in $\mathcal{R}_2$:
\begin{align*}
  R \preceq S \iff \pi_1(R) \subseteq \pi_1(S)
  \quad\text{and}\quad \pi_2(R) \supseteq \pi_2(S).
\end{align*}
We write $R \prec S$ if $R \preceq S$ and $R \neq S$.
If $R$ and $S$ intersect and belong to an antichain with respect to inclusion,
then dyadicity implies that $R \preceq S$ or $S \preceq R$.
Indeed, their projections are nested in each coordinate, and the antichain
condition forces the two containments to point in opposite directions.
For distinct such rectangles, both containments are strict.

For a collection $\mathcal{F}$ of dyadic rectangles, $i \in \{1,2\}$,
and $R \in \mathcal{F}$, define
\begin{align*}
  \xi_i(R;\, \mathcal{F}) = \{S \in \mathcal{F} \setminus \{R\}:\,
    \pi_i(S) \supseteq \pi_i(R)\}.
\end{align*}
With this, we can define the \emph{directionally good} parts of $R$:
\begin{align*}
  E_i(R;\, \mathcal{F}) = R \setminus \sh(\xi_i(R;\, \mathcal{F})).
\end{align*}

\begin{theorem}\label{pwi:sparse_intersections}
  Let $\mathcal{G}$ be a finite \eqref{P2} collection of dyadic rectangles in
  $\mathcal{R}_2$, and define the set of ordered pairs
  \begin{align*}
    \mathcal{G}^{\ultriangle} := \{(R,S) \in \mathcal{G}^2:\, R \preceq S\}.
  \end{align*}
  Define also the collection of nonempty pairwise intersections
  \begin{align*}
    \mathcal{G}^{\land} = \{R \cap S:\, (R,S) \in \mathcal{G}^{\ultriangle}\}.
  \end{align*}
  Then $\mathcal{G}^{\land}$ is $1/4$-sparse. More precisely, for
  $(R,S) \in \mathcal{G}^{\ultriangle}$, define
  \begin{align*}
    E(R,S) := E_2(R;\, \mathcal{G}) \cap E_1(S;\, \mathcal{G}).
  \end{align*}
  The sets $\{E(R,S):\, (R,S) \in \mathcal{G}^{\ultriangle}\}$
  are pairwise disjoint, cover $\sh(\mathcal{G})$, and satisfy
  \begin{align} \label{pwi:large_hiding_spot}
    |E(R,S)| \geq \frac{1}{4}|R \cap S|.
  \end{align}
  Furthermore, for $(R,S) \in \mathcal{G}^{\ultriangle}$,
  \begin{align} \label{pwi:cover}
    R \cap S = \bigsqcup_{\substack{P,Q \in \mathcal{G}\\
      P \preceq R \preceq S \preceq Q}} E(P,Q).
  \end{align}
\end{theorem}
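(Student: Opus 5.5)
The plan is to reduce everything to a pointwise description of the sets $E(R,S)$ by chains, and then to use the product structure of dyadic rectangles for the measure estimate.

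\textbf{Step 1: reduction to chains.} First, $\mathcal{G}$ is an antichain. Indeed, if $R\subsetneq S$ with $R,S\in\mathcal{G}$, then $|R\cap \sh(\mathcal{G}\setminus\{R\})|=|R|>\frac12|R|$, contradicting \eqref{P2}. By the remark preceding the theorem, any two intersecting members of $\mathcal{G}$ are therefore $\preceq$-comparable, with strict containments in both coordinates. Hence, for each $x\in\sh(\mathcal{G})$, the finite family $\mathcal{G}_x=\{R\in\mathcal{G}: x\in R\}$ is a chain for $\preceq$. Let $m(x)$ and $M(x)$ denote its minimum and maximum.

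\textbf{Step 2: identify $E(R,S)$ pointwise.} Let $x\in R$ and let $P\in\xi_2(R;\mathcal{G})$ with $x\in P$. Then $P$ intersects $R$, $P\neq R$ and $\pi_2(P)\supseteq\pi_2(R)$, so comparability forces $P\prec R$. Conversely, every $P\prec R$ has $\pi_2(P)\supseteq \pi_2(R)$, so $P\in\xi_2(R;\mathcal{G})$. Thus $x\in E_2(R;\mathcal{G})$ if and only if $x\in R$ and $R=m(x)$. Symmetrically, $x\in E_1(S;\mathcal{G})$ if and only if $x\in S$ and $S=M(x)$. Consequently
\[
E(R,S)=\{x:\ m(x)=R,\ M(x)=S\}.
\]
This immediately gives that the sets $E(R,S)$ are pairwise disjoint and cover $\sh(\mathcal{G})$, since each $x\in\sh(\mathcal{G})$ lies in exactly $E(m(x),M(x))$, and $m(x)\preceq M(x)$.

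\textbf{Step 3: the decomposition \eqref{pwi:cover}.} For $R\preceq S$ with $R\cap S\neq\emptyset$, dyadicity gives $R\cap S=\pi_1(R)\times\pi_2(S)$.
\begin{itemize}
\item[($\subseteq$)] If $x\in R\cap S$, then $R,S\in\mathcal{G}_x$, so $m(x)\preceq R\preceq S\preceq M(x)$, and $x\in E(m(x),M(x))$.
\item[($\supseteq$)] If $P\preceq R\preceq S\preceq Q$ and $E(P,Q)\neq\emptyset$, then $E(P,Q)\subseteq P\cap Q=\pi_1(P)\times\pi_2(Q)$. Since $\pi_1(P)\subseteq\pi_1(R)$ and $\pi_2(Q)\subseteq\pi_2(S)$, this set is contained in $\pi_1(R)\times\pi_2(S)=R\cap S$.
\end{itemize}
Disjointness of the union was already obtained in Step 2.

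\textbf{Step 4: the sparseness bound \eqref{pwi:large_hiding_spot}.} This is the main point, and it is where the product structure enters.
\begin{itemize}
\item Every $P\in\xi_2(R;\mathcal{G})$ meeting $R$ satisfies $P\prec R$, so $P\cap R=\pi_1(P)\times\pi_2(R)$ is a full vertical strip of $R$. Hence $R\cap\sh(\xi_2(R;\mathcal{G}))=A\times\pi_2(R)$ for some $A\subseteq\pi_1(R)$.
\item The \eqref{P2} condition for $R$ gives $|A|\,|\pi_2(R)|\le\frac12|R|$, that is, $|A|\le\frac12|\pi_1(R)|$.
\item Likewise, $S\cap \sh(\xi_1(S;\mathcal{G}))=\pi_1(S)\times B$ with $B\subseteq\pi_2(S)$ and $|B|\le\frac12|\pi_2(S)|$.
\end{itemize}
Intersecting with $R\cap S=\pi_1(R)\times\pi_2(S)$, we get
\[
E(R,S)\supseteq(\pi_1(R)\setminus A)\times(\pi_2(S)\setminus B).
\]
This product has measure at least $\frac12|\pi_1(R)|\cdot\frac12|\pi_2(S)|=\frac14|R\cap S|$.

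The only delicate point is the strip structure in the first bullet of Step 4. It requires verifying that, inside $R$, only $\prec$-smaller rectangles of $\xi_2$ contribute, and that these span the full height of $R$. Both facts follow from the antichain property established in Step 1 together with dyadic nestedness.
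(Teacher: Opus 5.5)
Your proof is correct and follows essentially the same route as the paper. Both arguments use the same ingredients: the antichain property deduced from \eqref{P2}; the chain $\mathcal{G}_x$ with its least and greatest elements, which identifies $E(R,S)$ pointwise and yields disjointness, covering and \eqref{pwi:cover}; and the product structure $E(R,S)\supseteq(\pi_1(R)\setminus A)\times(\pi_2(S)\setminus B)$, where \eqref{P2} makes each factor at least half its side. One small simplification: the full-height strip structure in Step 4 follows directly from the definition of $\xi_2$, because every $P\in\xi_2(R;\mathcal{G})$ has $\pi_2(P)\supseteq\pi_2(R)$ and hence $P\cap R=(\pi_1(P)\cap\pi_1(R))\times\pi_2(R)$, so the antichain property is not needed there.
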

\begin{proof}
  The \eqref{P2} condition implies that $\mathcal{G}$ is an antichain:
  if one member were contained in another, its overlap with the other
  members would have its full measure.

  Fix $(R,S) \in \mathcal{G}^{\ultriangle}$. By the definitions of the
  directional good parts, there are sets $F_1 \subseteq \pi_1(R)$ and
  $F_2 \subseteq \pi_2(S)$ such that
  \begin{align*}
    E_2(R;\, \mathcal{G}) &= F_1 \times \pi_2(R), \\
    E_1(S;\, \mathcal{G}) &= \pi_1(S) \times F_2.
  \end{align*}
  For every $T \in \mathcal{G}$, each $E_i(T;\, \mathcal{G})$ contains
  $T \setminus \sh(\mathcal{G} \setminus \{T\})$. Thus the \eqref{P2} condition gives
  \begin{align*}
    |F_1| \geq \frac{1}{2}|\pi_1(R)|
    \quad\text{and}\quad
    |F_2| \geq \frac{1}{2}|\pi_2(S)|.
  \end{align*}
  Since $R \preceq S$, we have
  $R \cap S = \pi_1(R) \times \pi_2(S)$ and $E(R,S)=F_1\times F_2$.
  Consequently,
  \begin{align*}
    |E(R,S)| = |F_1|\,|F_2|
    \geq \frac{1}{4}|\pi_1(R)|\,|\pi_2(S)|
    = \frac{1}{4}|R \cap S|,
  \end{align*}
  proving \eqref{pwi:large_hiding_spot}.

  To prove disjointness and covering, fix $z \in \sh(\mathcal{G})$ and set
  \begin{align*}
    \mathcal{G}_z := \{T \in \mathcal{G}:\, z \in T\}.
  \end{align*}
  By dyadicity and the antichain property, $\mathcal{G}_z$ is a nonempty
  finite chain under $\preceq$. Let $P_z$ and $Q_z$ be its least and
  greatest elements, respectively.
  For $T \in \mathcal{G}_z$, the members of $\xi_2(T;\, \mathcal{G})$
  containing $z$ are precisely the elements of $\mathcal{G}_z$ strictly
  below $T$; similarly, $\xi_1(T;\, \mathcal{G})$ corresponds to those
  strictly above $T$. Hence, for every $T \in \mathcal{G}$,
  \begin{align*}
    z \in E_2(T;\, \mathcal{G}) &\iff T=P_z, \\
    z \in E_1(T;\, \mathcal{G}) &\iff T=Q_z.
  \end{align*}
  It follows that $z$ belongs to exactly one of the sets $E(P,Q)$,
  namely $E(P_z,Q_z)$. This proves disjointness and covering, and together
  with \eqref{pwi:large_hiding_spot} proves the sparsity assertion.

  Finally, fix $(R,S) \in \mathcal{G}^{\ultriangle}$.
  If $z \in R \cap S$, then
  $P_z \preceq R \preceq S \preceq Q_z$, so $z \in E(P_z,Q_z)$ belongs
  to the right-hand side of \eqref{pwi:cover}.
  Conversely, if $P \preceq R \preceq S \preceq Q$, then
  \begin{align*}
    E(P,Q) &\subseteq P \cap Q = \pi_1(P) \times \pi_2(Q) \\
    &\subseteq \pi_1(R) \times \pi_2(S) = R \cap S.
  \end{align*}
  This proves \eqref{pwi:cover}. Figure~\ref{fig:pwi_geometry}
  illustrates the directional good parts.
\end{proof}

\begin{figure}[htpb]
  \centering
  \begin{tikzpicture}[x=0.23cm,y=0.23cm,
    lbl/.style={font=\small,inner sep=1.25pt}]
    \node[lbl] at (7.5,19) {Directional good parts};
    \fill[gray!10] (0,0) rectangle (4,16);
    \fill[gray!10] (0,0) rectangle (16,4);
    \fill[figteal!30] (1,0) rectangle (4,16);
    \fill[figplum!30] (0,1) rectangle (16,4);
    \fill[figslate!45] (1,1) rectangle (4,4);
    \draw[gray!55,densely dashed,thin] (1,0) -- (1,16);
    \draw[gray!55,densely dashed,thin] (0,1) -- (16,1);
    \draw[figteal,very thick] (0,0) rectangle (4,16);
    \draw[figplum,very thick] (0,0) rectangle (16,4);
    \node[figteal,lbl] at (2,17.2) {$R$};
    \node[figplum,lbl] at (17.2,2) {$S$};
    \node[figteal!75!black,lbl,rotate=90] at (2.5,10)
      {$E_2(R)$};
    \node[figplum!80!black,lbl] at (10,2.5) {$E_1(S)$};
    \draw[figteal,thick] (1,-1.3) -- (4,-1.3);
    \draw[figteal,thick] (1,-0.9) -- (1,-1.7);
    \draw[figteal,thick] (4,-0.9) -- (4,-1.7);
    \node[figteal,lbl] at (2.5,-2.8) {$F_1$};
    \draw[figplum,thick] (-1.3,1) -- (-1.3,4);
    \draw[figplum,thick] (-0.9,1) -- (-1.7,1);
    \draw[figplum,thick] (-0.9,4) -- (-1.7,4);
    \node[figplum,lbl] at (-3.1,2.5) {$F_2$};
    \node[lbl] at (8,-5.1) {$E(R,S)=F_1\times F_2$};
  \end{tikzpicture}
  \caption{The directional good parts for $R\preceq S$.
    The teal and plum regions are $E_2(R)=F_1\times\pi_2(R)$ and
    $E_1(S)=\pi_1(S)\times F_2$, respectively.
    Their slate-colored intersection is $E(R,S)=F_1\times F_2$.
    The \eqref{P2} condition ensures that this set occupies at least one quarter
    of the area of $R\cap S$.}
  \label{fig:pwi_geometry}
\end{figure}

Let $\{a_m\}$ be a sequence of non-negative numbers, and for an interval
$[r,s]$ of indices, define the \emph{mass} of $a$ over $[r,s]$, denoted by
$M_{[r,s]}$, to be the sum of the terms in the sequence $a$ over such interval:
\begin{align*}
  M_{[r,s]} = \sum_{m \in [r,s]} a_m.
\end{align*}
The next lemma relates the sum of the $q$-th power of the mass of $a$
over all possible intervals. It is an estimate of the form
\begin{align*}
  \sum_{\substack{r,s\\ 1 \leq r \leq s \leq N}} M_{[r,s]}^q \gtrsim
    \frac{\|a\|_{\ell^1}^\alpha}{\|a\|_{\ell^q}^\beta}.
\end{align*}
A simple scaling argument shows that, if this inequality were to hold, then
we would have $q = \alpha - \beta$.
Similarly, if the sequence consists of the same number repeated $N$ times,
and the inequality holds with a constant independent of $N$, then
\begin{align*}
  q + 2 \geq \alpha - \frac{\beta}{q}.
\end{align*}
At the endpoint, we thus have the possible candidate inequality
\begin{align*}
  \sum_{\substack{r,s\\ 1 \leq r \leq s \leq N}} M_{[r,s]}^q \gtrsim
    \frac{\|a\|_{\ell^1}^{q+2q'}}{\|a\|_{\ell^q}^{2q'}}.
\end{align*}
This motivates the following lemma.

\begin{lemma}\label{lem:interval_mass}
  Let $\{a_m\}_{m=1}^N$ be a sequence of non-negative numbers,
  not all of them zero. Then for $q > 1$ we have
  \begin{align*}
    \sum_{r \leq s} \Bigg( \sum_{m \in [r,s]} a_m \Bigg)^q \geq
 \frac{1}{2^{q+4q'}} \frac{\|a\|_{\ell^1}^{q + 2q'}}{\|a\|_{\ell^q}^{2q'}}.
  \end{align*}
\end{lemma}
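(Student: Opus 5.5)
The plan is a three-block argument. I find a long stretch of indices, split into a left block, a middle block and a right block, each carrying a fixed fraction of the total mass. Every interval that starts in the left block and ends in the right block then contains the middle block, so it has mass comparable to $\|a\|_{\ell^1}$. It remains to count these intervals, that is, to bound the lengths of the left and right blocks from below, and Hölder's inequality gives that count.

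Write $A=\|a\|_{\ell^1}$ and $B=\|a\|_{\ell^q}$. Note $B\le A$ and $\max_m a_m\le B$.

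The first step is a trivial case. Suppose some single term is large, say $a_{m_0}\geq cA$ for a small absolute constant $c$ (I will take $c=1/8$ and tune it later). Then $B\ge cA$, so
\begin{align*}
  \frac{A^{q+2q'}}{B^{2q'}} \le c^{-2q'} A^q .
\end{align*}
The single interval $[1,N]$ already contributes $A^q$ to the left-hand side, which settles this case.

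The second step constructs the blocks when every $a_m<cA$. Let $t_1$ be the smallest index with $\sum_{m\le t_1}a_m\ge A/4$. Minimality and the smallness of the atoms force this prefix mass to be less than $A/4+cA$. Symmetrically, let $t_2$ be the largest index with $\sum_{m\ge t_2}a_m\ge A/4$. Set $L=[1,t_1]$, $R=[t_2,N]$ and $M=(t_1,t_2)$. Then $M$ has mass at least $A-2(A/4+cA)=A/2-2cA\ge A/4$ for $c\le 1/8$; in particular $t_1<t_2$ and $M$ is nonempty. For $r\in L$ and $s\in R$ the interval $[r,s]$ contains $M$, so $M_{[r,s]}\ge A/4$. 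This gives
\begin{align*}
  \sum_{r\le s} M_{[r,s]}^q \;\ge\; |L|\,|R|\,(A/4)^q .
\end{align*}

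The third step bounds the block lengths by Hölder's inequality:
\begin{align*}
  \frac{A}{4}\le \sum_{m\in L}a_m \le |L|^{1/q'}\Big(\sum_{m\in L}a_m^q\Big)^{1/q}\le |L|^{1/q'}B ,
\end{align*}
so $|L|\ge (A/(4B))^{q'}$, and likewise for $R$. Combining with the previous display yields
\begin{align*}
  \sum_{r\le s} M_{[r,s]}^q \;\ge\; 4^{-q-2q'}\,\frac{A^{q+2q'}}{B^{2q'}} .
\end{align*}

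The only delicate point is the bookkeeping of constants to reach exactly $2^{-(q+4q')}$. The two cases pull in opposite directions: the trivial case costs $c^{-2q'}$, while the main case needs the atom threshold $c$ small enough that the middle block keeps mass at least $A/4$. The main case gives $2^{-2q-4q'}$, which is weaker than the stated $2^{-q-4q'}$ by a factor $2^{-q}$. To recover it, I would choose the thresholds asymmetrically: for instance, make the middle mass at least $A/2$ and the outer blocks carry mass at least $A/4$ each, and allow atoms up to $A/4$. With $c=1/4$, the trivial case gives $4^{-2q'}=2^{-4q'}\ge 2^{-q-4q'}$, which is fine. In the main case, intervals spanning the middle have mass at least $A/2$, and $|L|,|R|\ge (A/(4B))^{q'}$, giving $2^{-q}\cdot 2^{-4q'}$ as desired. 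The mass budget is what remains to be checked: with atoms below $A/4$, each outer block has mass under $A/2$, so the middle only gets mass above $A-2\cdot A/2=0$, not $A/2$. Making this consistent requires a little care, for example by measuring the outer blocks with the $\ell^q$ mass of the block itself rather than the global $B$. The structure of the argument is independent of these choices, and only this constant-tuning step may need an extra refinement.
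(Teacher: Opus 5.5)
You have not proved the lemma with the stated constant, and you acknowledge this yourself. With $c=1/8$ your two cases give $\min\{2^{-2q-4q'},\,2^{-6q'}\}$, which is strictly smaller than $2^{-q-4q'}$. The repair you sketch fails for the reason you give: allowing atoms up to $A/4$ leaves no mass for a middle block disjoint from $L$ and $R$. Switching to block-local $\ell^q$ norms would only change the H\"older step, not this mass budget. Up to constants your argument is sound, and in Theorem~\ref{weak:dyadic:2dim} the lemma is only used through $\lesssim$, so your weaker constant would suffice there. But the statement asserts $2^{-q-4q'}$.

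The missing observation is that you discard $a_{t_1}$ and $a_{t_2}$ when you bound $M_{[r,s]}$ below by the mass of the open block $(t_1,t_2)$. For $r\le t_1$ and $s\ge t_2$, the interval $[r,s]$ contains the \emph{closed} block $[t_1,t_2]$, and
\[
\sum_{m\in[t_1,t_2]}a_m = A-\sum_{m<t_1}a_m-\sum_{m>t_2}a_m > A-\frac{A}{4}-\frac{A}{4}=\frac{A}{2}.
\]
This uses only the minimality of $t_1$ and the maximality of $t_2$, with no assumption on the size of individual terms. You need only $t_1\le t_2$, which holds because $\sum_{m\le t_2}a_m=A-\sum_{m>t_2}a_m>3A/4\ge A/4$.

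Every pair $r\in[1,t_1]$, $s\in[t_2,N]$ then satisfies $r\le s$ and gives an interval of mass greater than $A/2$. Your H\"older bound $|L|,|R|\ge (A/(4B))^{q'}$ still applies, since it only uses that $L$ and $R$ each carry mass at least $A/4$. Together these give exactly $2^{-q}A^q(A/(4B))^{2q'}=2^{-q-4q'}A^{q+2q'}/B^{2q'}$. The large-atom case disappears entirely. This is the paper's proof, with $r_*=t_1$ and $s_*=t_2$ after normalizing $A=1$.
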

\begin{proof}
  We can assume without loss of generality that $\|a\|_{\ell^1} = 1$.
  Define
  \begin{align*}
    r_* &= \min \bigg\{ r \in [N]:\, \sum_{m \in [1,r]} a_m \geq \frac{1}{4} \bigg\}, \\
    s_* &= \max \bigg\{ s \in [N]:\, \sum_{m \in [s,N]} a_m \geq \frac{1}{4} \bigg\},
  \end{align*}
  where $[N] = \{1, \dots, N\}$.
  By definition we have 
  \begin{align*}
    \sum_{m \in [1, r_*)} a_m < \frac{1}{4} \quad\text{and}\quad
    \sum_{m \in (s_*, N]} a_m < \frac{1}{4}.
  \end{align*}
  
  We have $r_* \leq s_*$ since otherwise summing from $1$ to $s_*$ would
  exceed $\frac{1}{4}$:
  \begin{align*}
    \sum_{m \in [1, s_*]} a_m &= 1 - \sum_{m \in (s_*, N]} a_m \\
    &\geq 1 - \frac{1}{4} > \frac{1}{4},
  \end{align*}
  which violates the minimality of $r_*$.
  Also, the mass over the interval $[r_*, s_*]$ is large:
  \begin{align} \label{magic_lemma:large_mass}
    \sum_{m \in [r_*,s_*]} a_m = 1 - \sum_{m \in [1,r_*) \cup (s_*,N]} a_m
    \geq \frac{1}{2}.
  \end{align}

  By H\"older's inequality we have
  \begin{align*}
    \frac{1}{4} &\leq \sum_{m \in [1,r_*]} a_m \leq \|a\|_{\ell^q} r_*^{\frac{1}{q'}};
  \end{align*}
  thus $r_* \geq (4\|a\|_{\ell^q})^{-q'}$.
  Similarly, $N-s_*+1 \geq (4\|a\|_{\ell^q})^{-q'}$.
  Therefore, the number of intervals containing $[r_*,s_*]$ is at least
  $(4\|a\|_{\ell^q})^{-2q'}$. By \eqref{magic_lemma:large_mass},
  each such interval has mass at least $1/2$, so its contribution to the
  sum below is at least $2^{-q}$. Consequently,
  \begin{align*}
    \sum_{r \leq s} \Bigg( \sum_{m \in [r,s]} a_m \Bigg)^q
    &\geq \frac{1}{2^q(4\|a\|_{\ell^q})^{2q'}} = \frac{1}{2^{q+4q'} \|a\|_{\ell^q}^{2q'}}.
  \end{align*}
\end{proof}

\begin{theorem}\label{weak:dyadic:2dim}
Let $1<p<\infty$, and let $w\in A_p(\mathcal{R}_2)$.
Then the strong maximal operator $\mathcal{M}_{\mathcal{R}_2}$,
taken over the family $\mathcal{R}_2$, satisfies
\[
\|\mathcal{M}_{\mathcal{R}_2}\|_{L^p(w)\to L^{p,\infty}(w)}
\le c_{p} 
[w]_{A_p(\mathcal{R}_2)}^{\frac{1}{p} \left(2+\frac{1}{2(p-1)}\right)}.
\]
\end{theorem}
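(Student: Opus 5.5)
The plan is to bound the weighted measure of a level set. Fix $\lambda>0$ and $f\geq0$. By monotone convergence it suffices to bound $w(\sh(\mathcal{E}))$, where $\mathcal{E}$ is an arbitrary finite collection of rectangles in $\mathcal{R}_2$ with $\langle f\rangle_R>\lambda$ for each $R\in\mathcal{E}$. Corollary~\ref{p2_weighted_selection} gives a \eqref{P2} subcollection $\mathcal{G}$ with $w(\sh(\mathcal{E}))\lesssim[w]_{A_p(\mathcal{R}_2)}\,w(\sh(\mathcal{G}))$. The target exponent $\frac1p\big(2+\frac{1}{2(p-1)}\big)$ then leaves a budget for $\mathcal{G}$: it suffices to prove
\begin{align*}
  w(\sh(\mathcal{G}))\lesssim [w]_{A_p(\mathcal{R}_2)}^{1+\frac{1}{2(p-1)}}\,\lambda^{-p}\|f\|_{L^p(w)}^p .
\end{align*}
For comparison, the naive estimate $w(R)\lambda^p\leq[w]\int_R f^pw$, summed over a sparse family with the usual $A_\infty$-type overlap argument, costs $[w]^{1+\frac{1}{p-1}}$. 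So the gain must be exactly a square root in the second factor.

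Next I would use Theorem~\ref{pwi:sparse_intersections} to organize everything along chains. The sets $E(P,Q)$ partition $\sh(\mathcal{G})$. By \eqref{pwi:cover} with $S=R$, each $R\in\mathcal{G}$ is the disjoint union of the $E(P,Q)$ with $P\preceq R\preceq Q$. Fix a maximal chain $T_1\prec\dots\prec T_N$ in $\mathcal{G}$. The pairs $(T_r,T_s)$ with $r\leq s$ are indexed by intervals $[r,s]$. Both the Lebesgue mass and the $f$-mass of $T_r\cap T_s$ are sums over $E(P,Q)$ with $P\preceq T_r$ and $T_s\preceq Q$. This reproduces the interval structure of Lemma~\ref{lem:interval_mass}, with $a_m$ essentially the $f$-mass (or $w$-mass) carried by the pieces attached to $T_m$. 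I would then run a two-sided argument:
\begin{itemize}
  \item \emph{Upper bound.} The sparse family $\mathcal{G}^\land$, with disjoint parts satisfying $|E(R,S)|\geq\frac14|R\cap S|$, gives a quantity of the form $\sum_{(R,S)}\langle f\rangle_{R\cap S}^{q}\,(\cdot)(R\cap S)$, bounded via Hölder and the $A_p$ condition by $[w]\|f\|_{L^p(w)}^p$, up to at most one further standard sparse factor.
  \item \emph{Lower bound.} Lemma~\ref{lem:interval_mass}, applied along each chain with $q=p$ (or $p'$), bounds this same sum below by $\|a\|_{\ell^1}^{q+2q'}/\|a\|_{\ell^q}^{2q'}$.
\end{itemize}
The $\ell^1$ norm relates to $\lambda\sum|T_m|$ through $\langle f\rangle_{T_m}>\lambda$. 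The $\ell^q$ norm is controlled by the single-rectangle estimate $w(R)\lambda^p\leq[w]\int_Rf^pw$.

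Combining the two bounds, the factor $\|a\|_{\ell^q}^{2q'}$ should appear with a square-root advantage relative to the naive argument. This is where I expect the exponent $\frac{1}{2(p-1)}$, rather than $\frac{1}{p-1}$, to come from: the count of intervals containing the heavy middle block is quadratic, $(4\|a\|_q)^{-2q'}$, instead of linear. Finally, I would sum over chains (or over the points $z$, using the chains $\mathcal{G}_z$) and use that the $E(P,Q)$ are disjoint to avoid overcounting.

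The main obstacle is the middle step: choosing the sequence $a_m$ and the weight in the sparse sum so that three things hold simultaneously. The lemma must apply pointwise along the chain $\mathcal{G}_z$; the sparse upper bound must cost only the allowed power of $[w]_{A_p(\mathcal{R}_2)}$; and the resulting inequality must close with the exponents exactly balancing to $1+\frac{1}{2(p-1)}$. My first attempt would take $a_m=\sigma$-mass, namely $\int_{E(\cdot)}f\sigma^{-1}\cdot\sigma$ over the pieces hidden in $T_m$, with $q=p$. I would then test the exponents on the extremal case where all $a_m$ are equal, which is the case the scaling remark before Lemma~\ref{lem:interval_mass} identifies as sharp.
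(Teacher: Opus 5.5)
Your opening reduction matches the paper. The finite-family reduction, Corollary~\ref{p2_weighted_selection}, and the remaining budget $[w]_{A_p(\mathcal{R}_2)}^{1+\frac{1}{2(p-1)}}$ for $w(\sh(\mathcal{G}))$ are all correct, and Theorem~\ref{pwi:sparse_intersections} and Lemma~\ref{lem:interval_mass} are the right tools. However, the proposal stops exactly where the proof starts, and the mechanism you sketch for the middle step does not work.

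Lemma~\ref{lem:interval_mass} needs a quantity that is additive along a chain, $M_{[r,s]}=\sum_{m\in[r,s]}a_m$, and so grows with the interval. By \eqref{pwi:cover}, the Lebesgue, $w$- or $f$-mass of $T_r\cap T_s$ is a sum over the pieces $E(P,Q)$ with $[P,Q]\supseteq[T_r,T_s]$, i.e.\ over \emph{super}-intervals. It therefore decreases as $[r,s]$ grows and is not of the lemma's form.

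Your upper bound is also not available. By H\"older and $A_p$, each term of $\sum_{(R,S)}\langle f\rangle_{R\cap S}^p\,w(R\cap S)$ is at most $[w]\int_{R\cap S}f^pw$, but the overlap of $\mathcal{G}^{\land}$ is unbounded. Controlling it through the sets $E(R,S)$ requires an $L^p$ bound for the (possibly $\sigma$-weighted) strong maximal function. In the multi-parameter setting that is not a free ``standard sparse factor''; it is a weighted strong maximal bound of the very kind being proved. Finally, you never say how an inequality between $\|a\|_{\ell^1}$ and $\|a\|_{\ell^q}$ along fixed chains would yield a bound for $w(\sh(\mathcal{G}))$, nor how summing over overlapping chains avoids overcounting.

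The missing idea is to dualize and then bootstrap. With $q=p'$, $g=f/\sigma$ and $e_R=\ind_R/\sigma(R)^{1/q}$, one has $\lambda^p w(\sh(\mathcal{G}))\le[w]_{A_p(\mathcal{R}_2)}\sum_{R}\langle g,e_R\rangle_{L^p(\sigma)}^p$. So it suffices to show $\int(\sum_R c_R\ind_R)^q\,d\sigma\le B\sum_R c_R^q\,\sigma(R)$ for all $c_R\geq0$.

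On $E(P,Q)$ the function $\sum_R c_R\ind_R$ equals $\sum_{R\in[P,Q]}c_R$. This is the genuinely additive chain quantity, and the lemma is applied to the coefficients $(c_R)_{R\in[P,Q]}$, not to masses of rectangles. Set $X=\sum_{P\preceq Q}\sigma(E(P,Q))\big(\sum_{R\in[P,Q]}c_R\big)^q$, and let $Y$ be the same sum with $\sum_{R\in[P,Q]}c_R^q$ in place of the $q$-th power. H\"older with exponents $\frac{q+1}{q-1},\frac{q+1}{2}$ and the lemma give $X\lesssim Z^{\frac{q-1}{q+1}}Y^{\frac{2}{q+1}}$, where $Z=\sum_{P\preceq Q}\sigma(E(P,Q))\sum_{P\preceq R\preceq S\preceq Q}\big(\sum_{K\in[R,S]}c_K\big)^q$.

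Exchanging the order of summation and using \eqref{pwi:cover}, the inner weight becomes $\sigma(R\cap S)$. By \eqref{pwi:large_hiding_spot} and the $A_q$ condition for $\sigma$, this is $\lesssim[w]_{A_p(\mathcal{R}_2)}^{q-1}\sigma(E(R,S))$, so $Z\lesssim[w]_{A_p(\mathcal{R}_2)}^{q-1}X$. Thus $X$ reappears on the right, and the inequality self-improves to $X\lesssim[w]_{A_p(\mathcal{R}_2)}^{(q-1)^2/2}\,Y$. This means $B^{p/q}\lesssim[w]_{A_p(\mathcal{R}_2)}^{\frac{1}{2(p-1)}}$, which is exactly your budget. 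Without this dual test function and the self-improving step, your heuristic about the quadratic number of intervals containing the heavy block does not turn into an estimate.
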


\begin{proof}
Without loss of generality, assume $f \geq 0$.
Set
\[
q=\frac{p}{p-1},
\qquad
\sigma=w^{1-q}=w^{-\frac1{p-1}},
\qquad
A=[w]_{A_p(\mathcal{R}_2)}.
\]
  
  By monotone convergence, it suffices to prove a uniform estimate for
  $w(\sh(\mathcal{E}))$, where $\mathcal{E}$ is a finite family of rectangles
  satisfying $\langle f \rangle_R > \lambda$ for every $R \in \mathcal{E}$.
  Fix such a family and set $\Omega = \sh(\mathcal{E})$.

  Apply Corollary~\ref{p2_weighted_selection} to $\mathcal{E}$ to obtain a \eqref{P2}
  family $\mathcal{F} \subseteq \mathcal{E}$ such that
  \begin{align*}
    w(\Omega) = w(\sh(\mathcal{E})) \lesssim [w]_{A_p(\mathcal{R}_2)} w(\sh(\mathcal{F})).
  \end{align*}

  We now proceed as follows:
  \begin{align*}
    \lambda^p w(\sh(\mathcal{F})) &\leq \sum_{R \in \mathcal{F}} w(R) \lambda^p \\
    &\leq \sum_{R \in \mathcal{F}} \frac{w(R)}{|R|^p} \Big(\int_R f \Big)^p \\
    &= \sum_{R \in \mathcal{F}} \frac{w(R)(\sigma(R))^{p-1}}{|R|^p} \langle f, \frac{\ind_R}{(\sigma(R))^{1/q}} \rangle^p \\
    &\leq [w]_{A_p(\mathcal{R}_2)} \sum_{R \in \mathcal{F}} \langle g, e_R \rangle_{L^p(\sigma)}^p,
  \end{align*}
  where $g = f/\sigma = fw^{1/(p-1)}$ and $e_R = \frac{\ind_R}{(\sigma(R))^{1/q}}$.
  In particular, $\|g\|_{L^p(\sigma)} = \|f\|_{L^p(w)}$.

  Let $T : L^p(\sigma) \to \ell^p(\mathcal{F})$ be given by
  \begin{align*}
    (Tg)_R = \langle g, e_R \rangle_{L^p(\sigma)}.
  \end{align*}

  In this setting, we thus have
  \begin{align*}
    \lambda^p w(\Omega) \lesssim [w]_{A_p(\mathcal{R}_2)}^2 \|T\|_{L^p(\sigma) \to \ell^p(\mathcal{F})}^p \|f\|_{L^p(w)}^p.
  \end{align*}
  
  The adjoint, $T^* : \ell^q(\mathcal{F}) \to L^q(\sigma)$, is given by
  \begin{align*}
    T^*a = \sum_{R \in \mathcal{F}} a_R e_R.
  \end{align*}
  Therefore, $\|T\| = \|T^*\|$.

  By positivity, it suffices to find $B$ such that, for all $a_R \geq 0$,
  \begin{align*}
    \int \bigg( \sum_{R \in \mathcal{F}} a_R e_R \bigg)^q d\sigma
    \leq B \sum_{R \in \mathcal{F}} a_R^q,
  \end{align*}
  or, equivalently, for all $c_R \geq 0$,
  \begin{align*}
    \int \bigg( \sum_{R \in \mathcal{F}} c_R \ind_R \bigg)^q d\sigma
    \leq B \sum_{R \in \mathcal{F}} c_R^q \sigma(R).
  \end{align*}

  For $P,Q \in \mathcal{F}$ with $P \preceq Q$, let $E(P,Q)$ be the sparse
  part of $P \cap Q$ provided by Theorem~\ref{pwi:sparse_intersections}, and write
  \begin{align*}
    [P,Q] := \{R \in \mathcal{F}:\, P \preceq R \preceq Q\}.
  \end{align*}
  This order interval is a finite chain under $\preceq$: every member
  contains $P \cap Q$, so dyadicity and the antichain property of
  $\mathcal{F}$ imply that any two members are comparable.
  Using the disjoint covering from Theorem~\ref{pwi:sparse_intersections}
  and \eqref{pwi:cover} with $R=S$, we have
  \begin{align*}
    \int \bigg( \sum_{R \in \mathcal{F}} c_R \ind_R \bigg)^q d\sigma
    &= \sum_{\substack{P,Q \\ P \preceq Q}} \int_{E(P,Q)}
        \bigg( \sum_{R \in \mathcal{F}} c_R \ind_R \bigg)^q d\sigma \\
    &= \sum_{\substack{P,Q \\ P \preceq Q}}
        \sigma(E(P,Q)) \bigg( \sum_{\substack{R \in [P,Q]}} c_R \bigg)^q.
  \end{align*}
  Similarly
  \begin{align*}
    \sum_{R \in \mathcal{F}} c_R^q \sigma(R) &=
        \int \sum_{R \in \mathcal{F}} c_R^q \ind_R \, d\sigma \\
    &= \sum_{\substack{P,Q \\ P \preceq Q}} \sigma(E(P,Q))
        \sum_{R \in [P,Q]} c_R^q.
  \end{align*}
  So we need to find a bound $B$ such that
  \begin{align*}
    X:= \sum_{\substack{P,Q \\ P \preceq Q}} 
        \sigma(E(P,Q)) \bigg( \sum_{\substack{R \in [P,Q]}} c_R \bigg)^q
    \leq B
    \underbrace{\sum_{\substack{P,Q \\ P \preceq Q}} \sigma(E(P,Q))
        \sum_{R \in [P,Q]} c_R^q}_{Y}.
  \end{align*}

  In what follows, fractions with zero denominator are defined to be zero,
  since all coefficients in the corresponding interval then vanish.
  By H\"older's inequality we have
  \begin{align*}
    X 
    &= \sum_{\substack{P,Q \\ P \preceq Q}} 
        \sigma(E(P,Q)) \frac{\bigg( \sum_{\substack{R \in [P,Q]}} c_R \bigg)^q}{\bigg(\sum_{R \in [P,Q]} c^q_R \bigg)^{\frac{2}{(q+1)}}}
        \bigg(\sum_{R \in [P,Q]} c^q_R \bigg)^{\frac{2}{(q+1)}} \\
    &\leq
        \left(
            \sum_{\substack{P,Q \\ P \preceq Q}} 
            \sigma(E(P,Q)) \frac{\bigg( \sum_{\substack{R \in [P,Q]}} c_R \bigg)^{q+2q'}}{\bigg(\sum_{R \in [P,Q]} c^q_R \bigg)^{\frac{2q'}{q}}}    
        \right)^{\frac{q-1}{q+1}}
        Y^{\frac{2}{(q+1)}}.
  \end{align*}

  Fix $P \preceq Q$. Applying Lemma~\ref{lem:interval_mass} to the chain
  $[P,Q]$, we have
  \begin{align*}
    \frac{\bigg( \sum_{\substack{R \in [P,Q]}} c_R \bigg)^{q+2q'}}{\bigg(\sum_{R \in [P,Q]} c^q_R \bigg)^{\frac{2q'}{q}}} &\lesssim \sum_{\substack{R,S \in \mathcal{F}\\
    P \preceq R \preceq S \preceq Q}} \Bigg( \sum_{K \in [R,S]} c_K \Bigg)^q. 
  \end{align*}
  Thus, after exchanging the order of summation, we have
  \begin{align*}
    \sum_{\substack{P,Q \\ P \preceq Q}} \sigma(E(P,Q))
    \sum_{\substack{R,S \in \mathcal{F}\\
    P \preceq R \preceq S \preceq Q}} \Bigg( \sum_{K \in [R,S]} c_K \Bigg)^q
    &= \sum_{\substack{R,S \in \mathcal{F} \\ R \preceq S}}
    \Bigg( \sum_{K \in [R,S]} c_K \Bigg)^q
    \sum_{\substack{P,Q \in \mathcal{F} \\ P \preceq R \\ S \preceq Q }} \sigma(E(P,Q)) \\
    &= \sum_{R \preceq S} \Bigg( \sum_{K \in [R,S]} c_K \Bigg)^q
    \sigma(R \cap S) \\
    &\lesssim [w]^{q-1}_{A_p(\mathcal{R}_2)} \sum_{R \preceq S} \Bigg( \sum_{K \in [R,S]} c_K \Bigg)^q
    \sigma(E(R,S)) \\
    &= [w]^{q-1}_{A_p(\mathcal{R}_2)} X.
  \end{align*}
  Here the second equality uses \eqref{pwi:cover}, and the inequality follows
  from \eqref{pwi:large_hiding_spot}, H\"older's inequality, and
  $[\sigma]_{A_q(\mathcal{R}_2)}=[w]_{A_p(\mathcal{R}_2)}^{q-1}$.

  Combining the above inequalities we arrive at
  \begin{align*}
    X \lesssim \Big( [w]^{q-1}_{A_p(\mathcal{R}_2)} X \Big)^{\frac{q-1}{q+1}} Y^{\frac{2}{q+1}},
  \end{align*}
  which implies
  \begin{align*}
    X \lesssim [w]_{A_p(\mathcal{R}_2)}^{\frac{(q-1)^2}{2}} Y.
  \end{align*}
  Which leads to
  \begin{align*}
    \lambda^p w(\Omega) \lesssim [w]_{A_p(\mathcal{R}_2)}^{2+\frac{p(q-1)^2}{2q} } \int f^p w.
  \end{align*}
  Since $q=p'$, taking the supremum over finite families $\mathcal{E}$
  and then taking $p$th roots gives the conclusion. 
\end{proof}

As a consequence of the weak-type estimate above, together with standard
interpolation arguments and the reverse H\"older inequality for strong $A_p(\mathcal{R}_2)$ weights obtained by Luque, P\'erez and Rela
\cite{LPR2017}*{Theorem 1.2}, we obtain the following strong-type estimate.
\begin{theorem}\label{strong:dyadic:2dim}
Let $1<p<\infty$ and let $w\in A_p(\mathcal{R}_2)$.
Then the strong maximal operator $\mathcal{M}_{\mathcal{R}_2}$ satisfies
\[
 \|\mathcal{M}_{\mathcal{R}_2}\|_{L^p(w)\to L^p(w)}
 \lesssim_p
 [w]_{A_p(\mathcal{R}_2)}^{\,\frac1p\left(2+\frac{3}{2(p-1)}\right)}.
\]
\end{theorem}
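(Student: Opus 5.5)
The plan is to follow Buckley's scheme: combine the weak-type bound of Theorem~\ref{weak:dyadic:2dim} at a slightly smaller exponent $p-\varepsilon$ with the trivial $L^\infty$ bound. The size of $\varepsilon$ will be dictated by the reverse H\"older inequality of Luque, P\'erez and Rela. Throughout, write $A=[w]_{A_p(\mathcal{R}_2)}$, $q=p'$ and $\sigma=w^{1-q}$. Recall $[\sigma]_{A_q(\mathcal{R}_2)}=A^{q-1}$.

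\emph{Step 1 (openness with controlled constant).} Apply \cite{LPR2017}*{Theorem 1.2} to $\sigma\in A_q(\mathcal{R}_2)$. This gives a reverse H\"older inequality $\langle \sigma^r\rangle_R^{1/r}\le 2\langle\sigma\rangle_R$ for all $R\in\mathcal{R}_2$, with $r=1+\frac{1}{c_p A^{q-1}}$. Define $p_\varepsilon$ by $p_\varepsilon-1=(p-1)/r$, so that $\varepsilon=p-p_\varepsilon\simeq (p-1)/(c_pA^{q-1})$. Then $w^{1-p_\varepsilon'}=\sigma^{r}$. Hence
\[
\langle w\rangle_R\langle\sigma^r\rangle_R^{p_\varepsilon-1}=\langle w\rangle_R\big(\langle\sigma^r\rangle_R^{1/r}\big)^{p-1}\le 2^{p-1}\langle w\rangle_R\langle\sigma\rangle_R^{p-1},
\]
that is, $[w]_{A_{p_\varepsilon}(\mathcal{R}_2)}\le 2^{p-1}A$. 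We may assume $A$ is large, since otherwise everything is bounded by a constant. Then $p_\varepsilon\in[\frac{p+1}{2},p]$, so the constant $c_{p_\varepsilon}$ in Theorem~\ref{weak:dyadic:2dim} can be taken uniform in $p$. This uniformity should be checked by inspecting that proof: the constants there come only from Corollary~\ref{p2_weighted_selection}, Lemma~\ref{lem:interval_mass} and H\"older's inequality, all of which are continuous in the exponent.

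\emph{Step 2 (Marcinkiewicz-type interpolation).} Let $K$ denote the weak $L^{p_\varepsilon}(w)$ norm of $\mathcal{M}_{\mathcal{R}_2}$. For $\lambda>0$, split $f=f\ind_{\{|f|>\lambda/2\}}+f\ind_{\{|f|\le\lambda/2\}}$. Since $\mathcal{M}_{\mathcal{R}_2}$ is sublinear with $L^\infty$ norm $1$,
\[
w(\{\mathcal{M}_{\mathcal{R}_2}f>\lambda\})\le K^{p_\varepsilon}(2/\lambda)^{p_\varepsilon}\int_{\{|f|>\lambda/2\}}|f|^{p_\varepsilon}w.
\]
Integrating against $p\lambda^{p-1}d\lambda$ and using Fubini, the inner integral $\int_0^{2|f|}\lambda^{\varepsilon-1}d\lambda\simeq |f|^{\varepsilon}/\varepsilon$ appears. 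This gives
\[
\|\mathcal{M}_{\mathcal{R}_2}f\|_{L^p(w)}^p\lesssim_p \varepsilon^{-1}K^{p_\varepsilon}\|f\|_{L^p(w)}^p .
\]

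\emph{Step 3 (bookkeeping).} By Step 1 and Theorem~\ref{weak:dyadic:2dim},
\[
K^{p_\varepsilon}\lesssim A^{2+\frac{1}{2(p_\varepsilon-1)}}=A^{2+\frac{r}{2(p-1)}}.
\]
Since $r-1\simeq A^{-(q-1)}$, the excess factor is $A^{(r-1)/(2(p-1))}\le \exp\!\big(C\log A/A^{q-1}\big)\lesssim_p 1$. Together with $\varepsilon^{-1}\simeq A^{q-1}=A^{1/(p-1)}$, this gives
\[
\|\mathcal{M}_{\mathcal{R}_2}\|^p\lesssim A^{2+\frac{1}{2(p-1)}+\frac{1}{p-1}}=A^{2+\frac{3}{2(p-1)}}.
\]
Taking $p$th roots gives the claim.

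The main obstacle is Step 1. One must confirm that \cite{LPR2017} yields a reverse H\"older exponent for strong $A_q$ weights of size $1+c/[\sigma]_{A_q(\mathcal{R}_2)}$, with no worse power; any worse power would change $\varepsilon^{-1}$ and hence the final exponent. One must also confirm that it holds for the dyadic rectangle family $\mathcal{R}_2$. If only the general-rectangle version is available, I would either rerun the argument with the $\frac13$-trick grids or note that the LPR proof (iterating one-dimensional reverse H\"older on slices) works verbatim for product dyadic grids. The uniformity of $c_{p_\varepsilon}$ in Step 1 is routine but must be stated explicitly.
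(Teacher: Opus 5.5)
Your proposal is correct and follows essentially the same route as the paper: openness of the weight class at $p-\varepsilon$ with $\varepsilon\sim_p A^{-1/(p-1)}$ via the Luque--P\'erez--Rela reverse H\"older inequality for $\sigma$, then the weak-type bound of Theorem~\ref{weak:dyadic:2dim} at the lowered exponent, then Marcinkiewicz interpolation with $L^\infty$ at a cost of $\varepsilon^{-1/p}$, and finally absorbing the $A^{O(\varepsilon)}$ excess. You are more explicit than the paper about three points it uses implicitly: the openness computation itself, the uniformity of the weak-type constant for exponents in $[\frac{p+1}{2},p]$, and whether \cite{LPR2017} applies to the dyadic family $\mathcal{R}_2$ rather than to all rectangles.
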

\begin{proof}
For the sake of completeness, we include the short argument. Set
\[
A=[w]_{A_p(\mathcal{R}_2)},
\qquad
\sigma=w^{-\frac1{p-1}}.
\]
By the reverse H\"older inequality and the quantitative open property for
strong weights proved in \cite{LPR2017} applied to $\sigma$, there exists
\[
0<\varepsilon\leq\frac{p-1}{2},
\qquad
\varepsilon\sim_p A^{-1/(p-1)}
\]
such that, with $r=p-\varepsilon$,
\[
w\in A_r(\mathcal{R}_2),
\qquad
[w]_{A_r(\mathcal{R}_2)}\lesssim_p A.
\]
Applying the weak-type estimate at the exponent $r$ gives
\[
\|\mathcal{M}_{\mathcal{R}_2}\|_{L^r(w)\to L^{r,\infty}(w)}
\lesssim_p
A^{\,\frac1r\left(2+\frac{1}{2(r-1)}\right)}.
\]
Since $\|\mathcal{M}_{\mathcal{R}_2}\|_{L^\infty\to L^\infty}=1$, Marcinkiewicz interpolation yields
\[
\|\mathcal{M}_{\mathcal{R}_2}\|_{L^p(w)\to L^p(w)}
\lesssim_p
\varepsilon^{-1/p}
A^{\,\frac1p\left(2+\frac{1}{2(r-1)}\right)}.
\]
Since $r-1\geq(p-1)/2$ and $\varepsilon\lesssim_p A^{-1/(p-1)}$, we have
\begin{align*}
 A^{\frac{1}{2p}\left(\frac{1}{r-1}-\frac{1}{p-1}\right)}
 &= \exp\left(\frac{\varepsilon\log A}{2p(r-1)(p-1)}\right) \\
 &\leq \exp\left(C_p A^{-1/(p-1)}\log A\right)\lesssim_p 1,
\end{align*}
since $A\geq1$ and $A^{-1/(p-1)}\log A$ is bounded in terms of $p$.
Using also $\varepsilon^{-1/p}\lesssim_p A^{1/(p(p-1))}$, we obtain
\[
\|\mathcal{M}_{\mathcal{R}_2}\|_{L^p(w)\to L^p(w)}
\lesssim_p
A^{\,\frac{2}{p}
+\frac{1}{2p(p-1)}
+\frac{1}{p(p-1)}}
=
A^{\,\frac1p\left(2+\frac{3}{2(p-1)}\right)}.
\]
\end{proof}
Let us finally point out that the two-dimensional estimate above also
improves, by a simple iteration argument, the previously known bounds
in every dimension. Recall that the standard argument consists in
iterating the one-dimensional maximal operator, which gives
\[
\|\mathcal{M}_{\mathcal{R}_d}\|_{L^p(w)\to L^p(w)}
\lesssim_{p,d}
[w]_{A_p(\mathcal{R}_d)}^{\,d/(p-1)}.
\]
For instance, when $p=2$ this gives the exponent $2$ in dimension two,
while our estimate gives $7/4$.
Instead of iterating one coordinate at a time, we can group the
coordinates in pairs and apply the two-dimensional estimate to each
pair.
More precisely, the dyadic $d$-dimensional strong maximal operator is
pointwise dominated by the composition of
$\lfloor d/2\rfloor$ two-dimensional dyadic strong maximal operators, together
with one one-dimensional dyadic maximal operator when $d$ is odd. The
corresponding restrictions of $w$ to coordinate slices have strong
$A_p$ constant bounded by $[w]_{A_p(\mathcal{R}_d)}$.
Therefore we may use
\[
\|\mathcal{M}_{\mathcal{R}_2}\|_{L^p(w)\to L^p(w)}
\lesssim_p
[w]_{A_p(\mathcal{R}_2)}^{\,\frac1p
\left(2+\frac{3}{2(p-1)}\right)}
\]
for each pair of coordinates and, if $d$ is odd, the classical
one-dimensional estimate
\[
\|\mathcal{M}_{\mathcal{R}_1}\|_{L^p(w)\to L^p(w)}
\lesssim_p
[w]_{A_p(\mathcal{R}_1)}^{1/(p-1)}.
\]
This immediately yields the following improvement in every dimension.
\begin{corollary} \label{strong:dyadic:all_dims}
Let $1<p<\infty$, $d\ge2$, and let $w\in A_p(\mathcal{R}_d)$. Then
\[
\|\mathcal{M}_{\mathcal{R}_d}\|_{L^p(w)\to L^p(w)}
\lesssim_{p,d}
[w]_{A_p(\mathcal{R}_d)}^{\alpha},
\]
where
\[
    \alpha = \frac{d}{p-1} - \frac{\Big\lfloor \dfrac{d}{2}\Big\rfloor}{2p(p-1)}.
\]
\end{corollary}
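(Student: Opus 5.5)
We plan to prove Corollary~\ref{strong:dyadic:all_dims} by composing operators. Write $\mathbb{R}^d=\mathbb{R}^2\times\cdots\times\mathbb{R}^2\,(\times\mathbb{R})$, grouping the coordinates as $(x_1,x_2),(x_3,x_4),\dots$, with one leftover coordinate $x_d$ when $d$ is odd. Set $k=\lfloor d/2\rfloor$. For $j=1,\dots,k$ let $\mathcal{M}^{(j)}$ be the two-dimensional dyadic strong maximal operator acting in the variables $(x_{2j-1},x_{2j})$ with the other variables frozen, built from the grids $\mathcal{D}_{2j-1}\times\mathcal{D}_{2j}$. If $d$ is odd, let $M^{(d)}$ be the one-dimensional dyadic maximal operator in $x_d$.

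\textbf{Step 1: pointwise domination.} For a dyadic rectangle $R=\prod_i I_i$, Fubini lets us write the average over $R$ as an iterated average: first average $|f|$ over $I_1\times I_2$, then over $I_3\times I_4$, and so on. Each inner average is at most the corresponding $\mathcal{M}^{(j)}$ evaluated at the point. This gives
\[
\mathcal{M}_{\mathcal{R}_d}f\le \mathcal{M}^{(k)}\circ\cdots\circ\mathcal{M}^{(1)}f,
\]
with $M^{(d)}$ composed at the end if $d$ is odd.

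\textbf{Step 2: slice characteristics.} For almost every choice of the frozen variables, we need the restricted weight $w(\cdot,\bar x)$ on $\mathbb{R}^2$ to satisfy $[w(\cdot,\bar x)]_{A_p(\mathcal{R}_2)}\le[w]_{A_p(\mathcal{R}_d)}$, and similarly in one dimension. This is the step needing most care. Fix a two-dimensional dyadic rectangle $R'$ and a dyadic cube $Q$ in the frozen variables. The $A_p(\mathcal{R}_d)$ condition on $R'\times Q$ bounds $\langle\langle w\rangle_{R'}\rangle_Q\,\langle\langle\sigma\rangle_{R'}\rangle_Q^{p-1}$. We then shrink $Q$ to a point $\bar x$. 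The Lebesgue differentiation theorem along the dyadic filtration in the frozen variables applies at a.e. $\bar x$, because there are only countably many $R'$ to handle simultaneously. This yields $\langle w(\cdot,\bar x)\rangle_{R'}\langle\sigma(\cdot,\bar x)\rangle_{R'}^{p-1}\le[w]_{A_p(\mathcal{R}_d)}$ for a.e. $\bar x$. The paper already uses this fact in Corollary~\ref{p2_weighted_selection}. The dual weight of the slice is the slice of $\sigma$, so nothing else is needed.

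\textbf{Step 3: iterated norm estimates.} By Fubini, $\|\mathcal{M}^{(j)}g\|_{L^p(w)}^p$ equals the integral over the frozen variables of the $L^p(w(\cdot,\bar x))$ norms of two-dimensional maximal functions. Applying Theorem~\ref{strong:dyadic:2dim} slice by slice, with its constant uniform in the slice characteristic and that characteristic bounded via Step 2, we get
\[
\|\mathcal{M}^{(j)}\|_{L^p(w)\to L^p(w)}\lesssim_p A^{\frac1p(2+\frac{3}{2(p-1)})},\qquad A=[w]_{A_p(\mathcal{R}_d)}.
\]
The odd coordinate is handled the same way with Buckley's bound $A^{1/(p-1)}$. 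Multiplying the $k$ factors, together with the one-dimensional factor when $d$ is odd, gives the total exponent.

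\textbf{Step 4: arithmetic.} The per-pair exponent simplifies as
\[
\frac2p+\frac{3}{2p(p-1)}=\frac{2(p-1)+3/2}{p(p-1)}=\frac{2}{p-1}-\frac{1}{2p(p-1)},
\]
using $2(p-1)+\tfrac32=2p-\tfrac12$. So each pair saves $\frac{1}{2p(p-1)}$ relative to the iterated exponent $\frac{2}{p-1}$. Summing over the $k$ pairs, plus the exponent $\frac{1}{p-1}$ for the leftover coordinate when $d$ is odd, gives exactly $\alpha=\frac{d}{p-1}-\frac{\lfloor d/2\rfloor}{2p(p-1)}$.
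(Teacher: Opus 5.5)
Your proposal is correct and follows the paper's own argument. You dominate $\mathcal{M}_{\mathcal{R}_d}$ pointwise by a composition of $\lfloor d/2\rfloor$ two-dimensional dyadic strong maximal operators (plus a one-dimensional one when $d$ is odd), bound the slice characteristics by $[w]_{A_p(\mathcal{R}_d)}$, and apply Theorem~\ref{strong:dyadic:2dim} to each pair together with Buckley's bound for the leftover coordinate. Your Step~2 (dyadic differentiation in the frozen variables, handling the countably many $R'$ at once) and the exponent arithmetic in Step~4 fill in details the paper only asserts, and both are sound.
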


The weak-type inequality also extends to higher dimensions by placing one
two-dimensional maximal operator outermost, applying the weak-type estimate
to that operator, and using strong-type estimates for the remaining
coordinate blocks. 

\begin{corollary} \label{weak:dyadic:all_dims}
  Let $1<p<\infty$, $d\geq 2$, and $w\in A_p(\mathcal{R}_d)$. Then
  \begin{align*}
    \|\mathcal{M}_{\mathcal{R}_d}f\|_{L^{p,\infty}(w)}
    \lesssim_{p,d} [w]_{A_p(\mathcal{R}_d)}^{\beta}
    \|f\|_{L^p(w)},
  \end{align*}
  where
  \begin{align*}
    \beta = \frac1p + \frac{d-1}{p-1}
      - \frac{\Big\lfloor \dfrac{d}{2} \Big\rfloor}{2p(p-1)}.
  \end{align*}
\end{corollary}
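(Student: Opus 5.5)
Following the remark before the statement, I would split the coordinates as $\mathbb{R}^d=\mathbb{R}^2\times\mathbb{R}^{d-2}$, writing $x=(x',x'')$, and use the pointwise domination
\[
\mathcal{M}_{\mathcal{R}_d}f \le \mathcal{M}^{(1,2)}\big(\mathcal{M}^{(3,\dots,d)} f\big),
\]
where $\mathcal{M}^{(1,2)}$ is the two-dimensional dyadic strong maximal operator acting in $x'$ with $x''$ frozen, and $\mathcal{M}^{(3,\dots,d)}$ is the $(d-2)$-dimensional dyadic strong maximal operator acting in $x''$ with $x'$ frozen. This holds because averaging over $I_1\times\cdots\times I_d$ equals averaging over $I_1\times I_2$ of averages over $I_3\times\cdots\times I_d$. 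The weak-type estimate is applied to the outer operator and the strong-type estimates to the inner one.

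\textbf{Step 1 (outer operator).} For almost every $x''$, the slice $w(\cdot,x'')$ belongs to $A_p(\mathcal{R}_2)$ with constant at most $A:=[w]_{A_p(\mathcal{R}_d)}$. This follows from Lebesgue differentiation in the $x''$ variables applied to the $A_p$ condition on rectangles $R'\times J$ with $J$ shrinking to $x''$; the same slicing fact is used in Corollary~\ref{p2_weighted_selection}. Set $h=\mathcal{M}^{(3,\dots,d)}f$. For fixed $x''$, Theorem~\ref{weak:dyadic:2dim} gives
\[
w(\cdot,x'')\big(\{x': \mathcal{M}^{(1,2)}h(x',x'')>\lambda\}\big)\lesssim_p \lambda^{-p} A^{\frac2p+\frac{1}{2p(p-1)}}\int |h(x',x'')|^p w(x',x'')\,dx'.
\]
Integrating in $x''$ (Fubini) bounds $\lambda^p w(\{\mathcal{M}_{\mathcal{R}_d}f>\lambda\})$ by $A^{\frac2p+\frac1{2p(p-1)}}\|h\|_{L^p(w)}^p$. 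The point is that the weak-type quasi-norm is additive in $p$-th powers across slices, so no loss occurs here.

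\textbf{Step 2 (inner operator).} Again by slicing, $w(x',\cdot)\in A_p(\mathcal{R}_{d-2})$ with constant at most $A$. Apply Corollary~\ref{strong:dyadic:all_dims} in dimension $d-2$ when $d-2\ge2$, the classical one-dimensional bound with exponent $1/(p-1)$ when $d=3$, and the identity when $d=2$. Then integrate in $x'$ to get
\[
\|h\|_{L^p(w)}\lesssim A^{\gamma}\|f\|_{L^p(w)}, \qquad \gamma=\frac{d-2}{p-1}-\frac{\lfloor (d-2)/2\rfloor}{2p(p-1)}.
\]

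\textbf{Step 3 (exponent bookkeeping).} The total exponent is
\[
\frac2p+\frac{1}{2p(p-1)}+\frac{d-2}{p-1}-\frac{\lfloor d/2\rfloor-1}{2p(p-1)}.
\]
Since $\frac2p=\frac1p+\frac1p$ and $\frac1p=\frac1{p-1}-\frac1{p(p-1)}$, this equals
\[
\frac1p+\frac{d-1}{p-1}-\frac{1}{p(p-1)}+\frac{1}{2p(p-1)}-\frac{\lfloor d/2\rfloor-1}{2p(p-1)},
\]
and
\[
-\frac{1}{p(p-1)}+\frac{1}{2p(p-1)}+\frac{1}{2p(p-1)}=0,
\]
leaving exactly $\beta$. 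Passing from dyadic grids to $\mathcal{R}$ via the $\frac13$-trick, as described in the introduction, then yields Theorem~\ref{MainWeakTheorem} as well.

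\textbf{Main obstacle.} The only delicate point is the slicing claim in Steps 1 and 2. One must check that the $A_p$ constant of almost every slice is controlled, for all dyadic rectangles simultaneously, by $[w]_{A_p(\mathcal{R}_d)}$. Since there are countably many dyadic rectangles, I would first take null sets rectangle by rectangle and then apply Lebesgue differentiation to $\langle w\rangle$ and $\langle\sigma\rangle$ along shrinking dyadic cubes in the frozen variables. Everything else is formal.
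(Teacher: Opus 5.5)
Your argument is correct and is essentially the paper's: one two-dimensional operator is placed outermost, the weak-type bound of Theorem~\ref{weak:dyadic:2dim} is applied slice by slice and integrated, and strong-type bounds handle the inner block; the only difference is that you invoke Corollary~\ref{strong:dyadic:all_dims} in dimension $d-2$ as a black box instead of unrolling it into the remaining $k-1$ pairs plus Buckley's bound for an odd coordinate, which yields the same exponent count. One small slip: the distributional inequality in Step~1 is at the level of $p$-th powers, so it should carry $A^{2+\frac{1}{2(p-1)}}$ (the $p$-th power of the norm bound) rather than $A^{\frac2p+\frac{1}{2p(p-1)}}$; since Step~3 adds norm exponents, your final $\beta$ is unaffected.
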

\begin{proof}
Write $d=2k+\ell$, where $k=\lfloor d/2\rfloor$ and
$\ell\in\{0,1\}$, and set $A=[w]_{A_p(\mathcal{R}_d)}$.
Use the same coordinate decomposition as above, placing one of the
two-dimensional maximal operators outermost. Apply the weak-type estimate
of Theorem \ref{weak:dyadic:2dim} to this operator on each coordinate slice.
The $A_p$ characteristics of the sliced weight are bounded by $A$, so integrating the
resulting distribution-function inequality over the remaining coordinates
gives a weak-type estimate on $\mathbb{R}^d$ with the same power of $A$.
Apply Theorem \ref{strong:dyadic:2dim} to the remaining $k-1$ pairs, and use
Buckley's one-dimensional strong-type estimate \cite{Buckley1993} for
the remaining coordinate when $\ell=1$.
Thus, we obtain
\begin{align*}
  \|\mathcal{M}_{\mathcal{R}_d}\|_{L^p(w)\to L^{p,\infty}(w)}
  \lesssim_{p,d} A^{\beta},
\end{align*}
since the sum of the exponents is
\begin{align*}
  &\frac1p\left(2+\frac{1}{2(p-1)}\right)
  +\frac{k-1}{p}\left(2+\frac{3}{2(p-1)}\right)
  +\frac{\ell}{p-1} \\
  &\qquad=\frac1p+\frac{d-1}{p-1}-\frac{k}{2p(p-1)}
  =\beta.
\end{align*}
\end{proof}

The proofs of the non-dyadic Theorem \ref{MainTheorem}
and Theorem \ref{MainWeakTheorem} follow from Corollary \ref{strong:dyadic:all_dims} and Corollary \ref{weak:dyadic:all_dims}
respectively, applying the higher-dimensional generalization of the dyadic domination theorem of \cite{LiPipherWard2015}*{Theorem 6.1(ii)}.

\section*{Acknowledgments and AI usage}
We thank Andrei K. Lerner for sharing his results with us.
The second author thanks the first for his kind invitation and hospitality while visiting Bah\'ia Blanca.
An LLM was used for minor grammar corrections,
the TikZ code of the figures,
as well as the idea to use (and the proof of)
Lemma \ref{lem:interval_mass}.

\bibliography{bibliography}
\end{document}